\newtheorem{thm}{\textbf{Theorem}}[section]
\newtheorem{prop}[thm]{\textbf{Proposition}}
\newtheorem{pbm}[thm]{\textbf{Problem}}
\newtheorem{lem}[thm]{\textbf{Lemma}}
\newtheorem*{ack*}{\textbf{Acknowledgement}}
\begin{document}
\title{on elliptic surfaces which have no 1-handles} 
\author[DAISUKE KUSUDA]{DAISUKE KUSUDA}
\footnote[0]{\textit{Date} : \today}
\footnote[0]{2020 \textit{Mathematics Subject Classification}. Primary 57R55, Secondary 57R19.}
\footnote[0]{\textit{Key words and phrases}. 4-manifolds; 1-handles; elliptic surfaces.}
\email{Kusuda-d@ist.osaka-u.ac.jp}
  
\begin{abstract}
Gompf conjectured that the elliptic surface $E(n)_{p,q}$ has no handle decomposition without 1- and 3-handles.
We prove that each of the elliptic surfaces $E(n)_{5,6}$, $E(n)_{6,7}$, $E(n)_{7,8}$ and $E(n)_{8,9}$ has a handle decomposition without 1-handles for $n\geq4$, $n\geq 5$, $n\geq 9$ and $n\geq 24$, respectively.
\end{abstract} 

\maketitle

\section{Introduction}

In 4-dimensional topology, it is an important problem whether the 4-sphere $S^4$ admits an exotic smooth structure.
If an exotic $S^4$ exists, every handle decomposition of it must have at least one 1- or 3-handle (see \cite{Y1}).
Moreover, when a simply connected closed 4-manifold admits a handle decomposition without 1-handles, it has a handle decomposition without 3-handles.
For these reasons, the following problem (Problem 4.18 in Kirby's problem list \cite{K}) is fascinating:
\begin{pbm}
\emph{Does every simply connected, closed 4-manifold have a handlebody decomposition without 1-handles? Without 1- and 3-handles?} 
\end{pbm}

There are many simply connected closed 4-manifolds not to require 1-handles (e.g., \cite{Ru}, \cite{GS}, \cite{AK}, \cite{G}, \cite{Y3}).
On the other hand, Harer, Kas and Kirby (\cite{HHK}) conjectured that every handle decomposition of the elliptic surface $E(1)_{2,3}$ requires at least a 1-handle.
Furthermore, Gompf (\cite{G}) conjectured that the elliptic surface $E(n)_{p,q}$ ($n\geq 1$, \( p,q \geq 2, \gcd(p,q) = 1 \)) has no handle decomposition without 1- and 3-handles.
Here, \( E(n) \) denotes the simply connected elliptic surface with Euler characteristic \( 12n \) and without multiple fibers. 
Then, \( E(n)_{p,q} \) is obtained from \( E(n) \) by logarithmic transformations of the multiplicities $p$ and $q$.

Akbulut (\cite{A}) and Yasui (\cite{Y2}) independently disproved the Harer-Kas-Kirby conjecture.
More strongly, Akbulut proved that $E(1)_{2,3}$ has a handle decomposition without 1- and 3-handles, and Yasui showed that $E(n)_{p,q}$ admits a handle decomposition without 1-handles for $n\geq 1$ and $(p,q)=(2,3), (2,5), (3,4), (4,5)$.  
In addition, Sakamoto (\cite{S}) showed that there exists a handle decomposition of $E(1)_{2,7}$ which has no 1-handle.
In this paper, by modifying Yasui's method, we prove the following theorem: 
\begin{thm}
  \label{thm:theorem1.2}
Each of the elliptic surfaces $E(n)_{5,6}$, $E(n)_{6,7}$, $E(n)_{7,8}$ and $E(n)_{8,9}$ has a handle decomposition without 1-handles for $n\geq4$, $n\geq 5$, $n\geq 9$ and $n\geq 24$, respectively.
\end{thm}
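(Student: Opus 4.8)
The plan is to begin from Gompf's explicit handle diagram for the nucleus $N(n)_{p,q}$ carrying the two multiple fibers, and to eliminate all of its $1$-handles by Kirby calculus, while arranging the remaining part of $E(n)_{p,q}$ so that it contributes only handles of index $\ge 2$. Since a regular fiber neighborhood $T^2\times D^2$ naturally carries $1$-handles coming from the $1$-cells of the torus, and the logarithmic transformations of multiplicities $p$ and $q$ only modify the attaching data of the surrounding $2$-handles, the whole theorem reduces to a local problem inside the nucleus: cancel each dotted circle against a suitable $2$-handle. Following Yasui's treatment, the complement $E(n)_{p,q}\setminus N(n)_{p,q}$ is a fiber sum of standard pieces that can be attached using only $2$-, $3$- and $4$-handles; this part is essentially independent of $(p,q)$ and is not where the difficulty lies.

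First I would fix the handle diagram of $N(n)_{p,q}$ in the normal form used by Gompf and Yasui, recording precisely the framings of the $2$-handles together with their linking and winding numbers with the dotted circles; these are explicit functions of $n$, $p$ and $q$, with $n$ entering through the framing $-n$ of the section handle. The essential feature is that the attaching circle of the log-transform $2$-handle winds around a dotted circle a number of times that grows with $p$ (respectively $q$), so it cannot be cancelled directly.

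Next I would carry out the $1$-handle cancellation. The standard move is that a dotted circle cancels a $2$-handle precisely when their attaching circles meet geometrically once; so I would first reduce the geometric intersection number to one by repeatedly sliding the winding $2$-handle over the $2$-handles supplied by the cusp and the $-n$-framed section, each slide removing one strand at a controlled cost in framing. Because the fiber sum of $n$ copies produces on the order of $12n$ two-handles, larger $n$ furnishes both more framing slack on the $-n$-framed handle and more $2$-handles to slide over, so that enough independent slides become available to untwist all the strands and expose a geometrically once intersection for each $1$-handle.

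The hard part will be controlling the framings throughout this unwinding and showing that it actually terminates in the cancelling position. Each slide changes the framing of the active $2$-handle by a term depending on $n$ (roughly $-n$ plus twice a linking number), and the cancellation is legitimate only if, at every stage, the relevant framing and linking coefficients retain the correct sign and the dotted circle is ultimately met exactly once. Tracking these coefficients yields an arithmetic constraint of the shape $n\ge f(p)$; pinning down $f$ and checking that the constructed sequence of slides genuinely succeeds exactly when $n\ge 4,5,9,24$ for $(p,q)=(5,6),(6,7),(7,8),(8,9)$ is the crux, and I expect it must be verified case by case rather than by a single uniform estimate.
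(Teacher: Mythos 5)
Your overall strategy --- localize to the nucleus $N(n)_{p,q}$ and cancel its dotted circles directly against 2-handles by unwinding the log-transform handles --- is not the route the paper takes, and as described it has a genuine gap at its central step. The difficulty is that the attaching circle of a multiplicity-$p$ log-transform 2-handle runs $p$ times over a 1-handle of the fiber torus, and there is no supply of handle slides that simply ``removes one strand at a controlled cost in framing'': sliding over the vanishing-cycle 2-handles of the cusp or over the $-n$-framed section does not reduce the winding number around the dotted circle, and this winding is exactly the obstruction that made the Harer--Kas--Kirby conjecture plausible in the first place. ``More framing slack for large $n$'' is not the mechanism by which large $n$ helps, and you give no concrete move that exposes a geometrically-once intersection; that is the whole content of the theorem, not a verification to be deferred to the end.

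What the paper actually does is quite different. It never cancels 1-handles inside $N(n)_{p,q}$. Instead it performs only the first log transform, obtaining $E(n)_p$, and invokes Yasui's Lemma (Lemma \ref{thm:lemma2.1} here): if a simply connected closed 4-manifold admits a handle decomposition of a very specific shape containing a cusp neighborhood (Figure \ref{fig:diagram-2.1}), then the result of a log transform on a torus in that cusp neighborhood has a handle decomposition without 1-handles --- that is, the second log transform itself is what produces the 1-handle-free decomposition, via the lemma. The remaining work is to put $E(n)_p$ into the required shape, and this is done globally rather than locally in the nucleus: the $12n$ vanishing cycles of the fiber-sum diagram are encoded as the word $(ab)^{6n}$, and handle slides realizing the conversions $aba\to bab$, $bab\to aba$, $(ab)^5\to(a^3b)^2a^2$ and the like reorganize it into words of the form $a^{p-1}b^{n+1}a^{p}b^{\ast}(ab)^{\ast}$, which yield the configuration demanded by the lemma. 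The role of large $n$ is purely combinatorial --- the conversions require subwords of nonnegative length such as $(ab)^{2n-8}$, $(ab)^{2n-18}$, $(ab)^{2n-48}$, and enough leftover $(-1)$-framed handles to discard --- and this is precisely where the thresholds $n\geq 4,5,9,24$ come from. Your proposal contains neither the lemma nor the word-rewriting device, so the crux remains unaddressed.
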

\begin{ack*}
\emph{The author would like to express his deep gratitude to his adviser, Kouichi Yasui, for his encouragement and helpful comments.
He would like to thank to Yohei Wakamaki and Natsuya Takahashi for their encouragement.}
\end{ack*}

\section{Logarithmic transformation}
In this section, we introduce a lemma to prove Theorem \ref{thm:theorem1.2}. 
Let $T$ be a 2-torus embedded in a 4-manifold $X$ with the trivial normal bundle $\nu T\approx T\times D^2$.
\textit{A logarithmic transformation on $T$} is defined as a process of removing the interior of $\nu T$ from $X$ and gluing $T^2\times D^2$ by a diffeomorphism $\varphi : T^2\times \partial D^2 \rightarrow \partial \nu T$.  
Furthermore, for a fixed projection $\pi : \nu T \rightarrow D^2$, the absolute value of the degree of the map $\pi \circ \varphi | _{\{\textit{pt.}\}\times \partial D^2}:\{\textit{pt.}\}\times \partial D^2\rightarrow \partial D^2$ is called the \textit{multiplicity} of the logarithmic transformation. 

Now, suppose that $T\subset X$ \textit{lies in a cusp neighborhood}, \textit{i.e.}, there exists a cusp neighborhood $N$ (shown in Figure \ref{fig:cusp_neighborhood}) embedded in $X$ such that $T$ is a regular fiber of $N$ and an elliptic fibration on $N$ determines $\pi : \nu T\rightarrow D^2$. 
The diffeomorphism type of $X_{p_1,p_2, \cdots ,p_k}$ depends only on $X$, $T$, $\pi$ and the multiplicities.
(Here, we denote $X_{p_1,p_2,\cdots, p_k}$ by the 4-manifold obtained from $X$ by logarithmic transformations on parallel copies of $T$ with multiplicities $p_1, p_2, \cdots, p_k$.)
\begin{figure}[htbp]
\begin{center}
  \includegraphics[width=5cm]{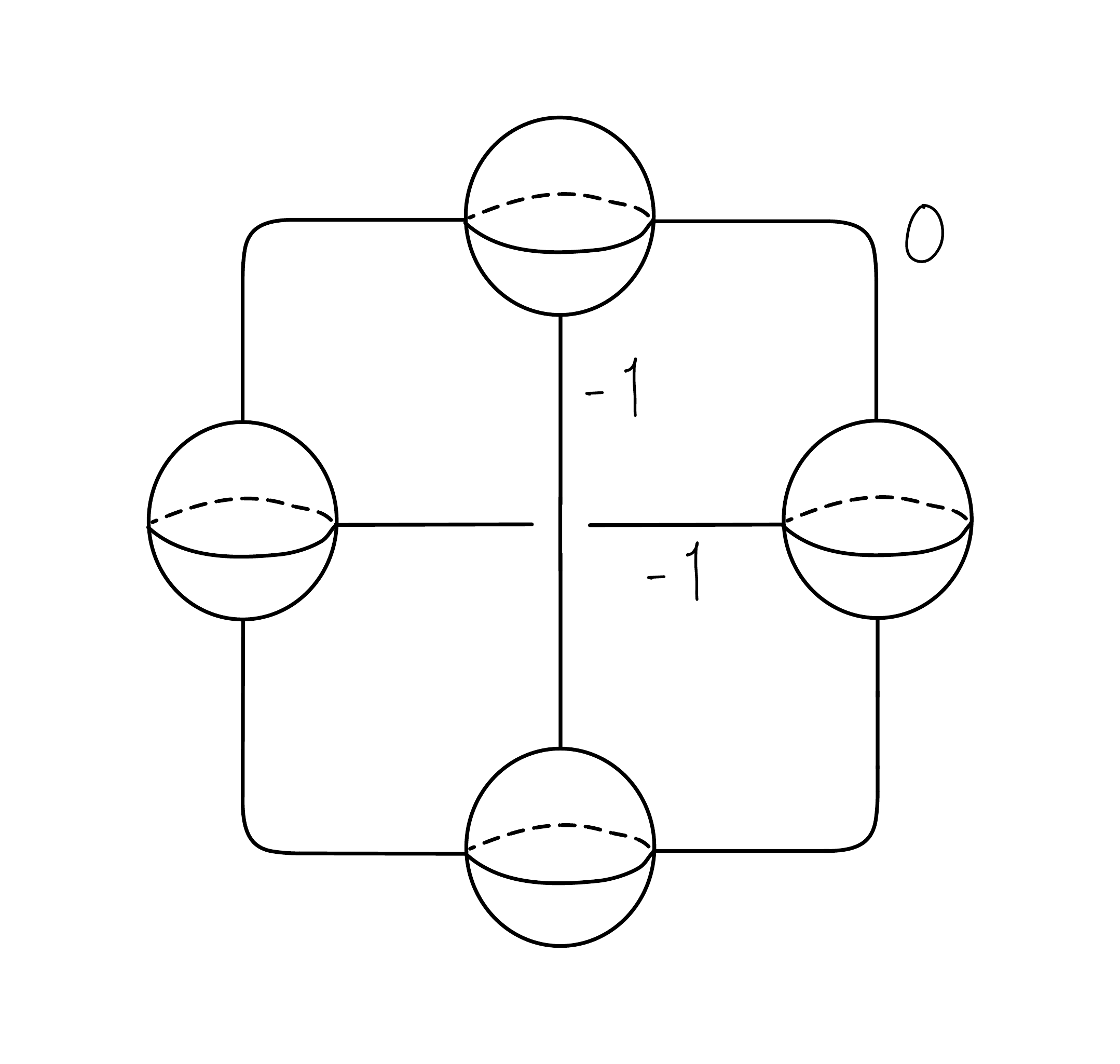} 
 \caption{Cusp neighborhood}
  \label{fig:cusp_neighborhood}
\end{center}
\end{figure}
In particular, the elliptic surface $E(n)_{p_1,p_2,\cdots,p_k}$ is determined by $n\in \mathbb{N}$ and the multiplicities $p_1, p_2, \cdots, p_k$ up to diffeomorphism.
For more details on the logarithmic transformation, see also \cite{E}, \cite{G}, \cite{GS} and \cite{HHK}.

Yasui (\cite{Y2}) proved the following lemma related to the above operation:

\begin{lem}
\label{thm:lemma2.1}
Let $X$ be a simply connected closed 4-manifold with a handle decomposition as shown in Figure \ref{fig:diagram-2.1}.
In this figure, $p$ is an arbitrary integer greater than 1, $q$ is an arbitrary integer and $h_2$, $h_3$ are arbitrary non-negative integers.
Furthermore, let $X_p$ be a 4-manifold obtained by the logarithmic transformation on a 2-torus lying in the cusp neighborhood with the multiplicity $p$.
Then, $X_{p}$ admits a handle decomposition without 1-handles.

  \
\begin{figure}[htbp]
\begin{center}
  \includegraphics[width=10cm]{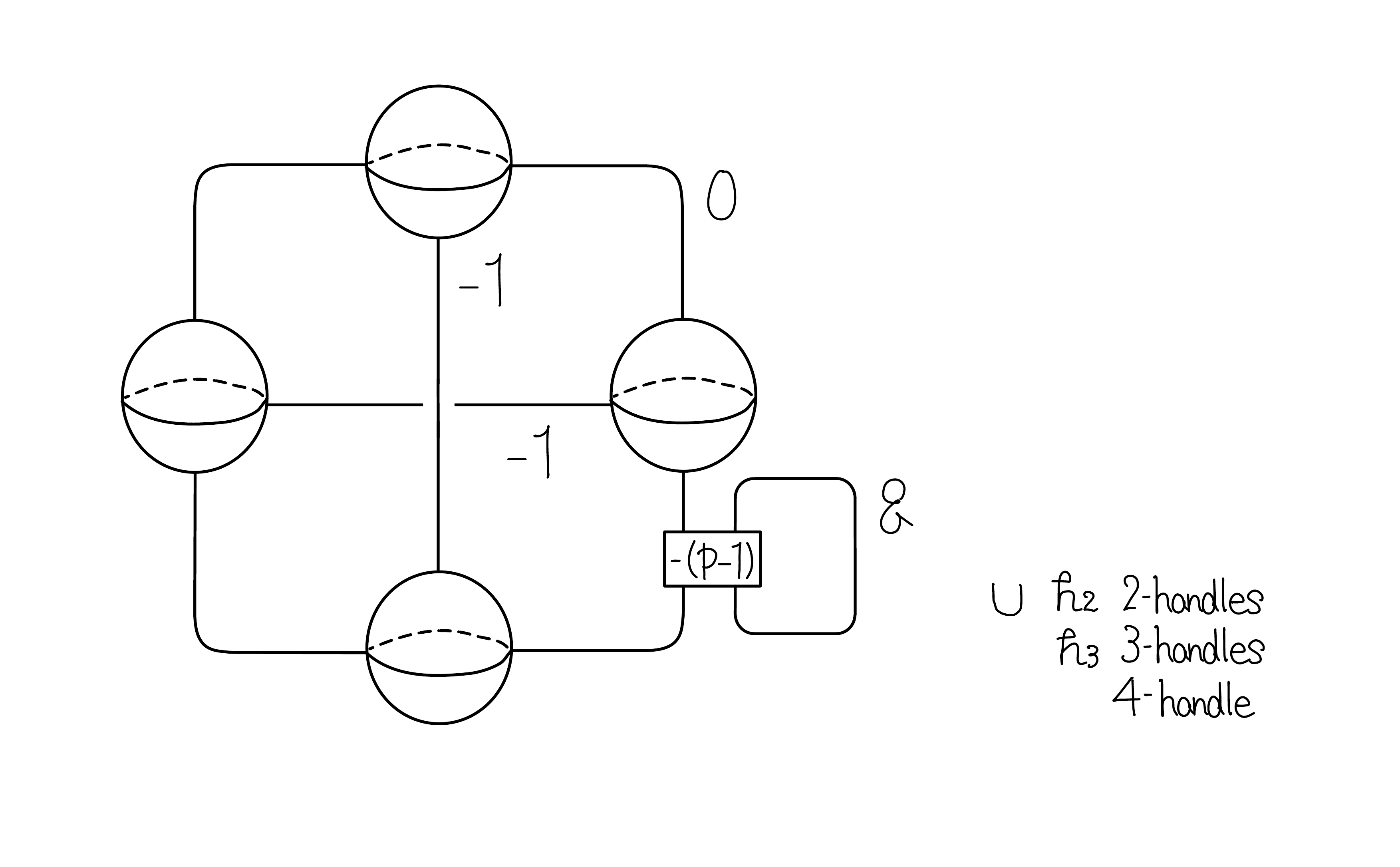} 
 \caption{}
  \label{fig:diagram-2.1}
\end{center}
\end{figure}
\
\end{lem}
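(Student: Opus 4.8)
The plan is to prove Lemma~\ref{thm:lemma2.1} entirely by Kirby calculus, by making the logarithmic transformation explicit at the level of handle diagrams and then cancelling all $1$-handles against $2$-handles. First I would replace the cusp neighborhood visible in Figure~\ref{fig:diagram-2.1} by the standard handle picture for a multiplicity-$p$ logarithmic transformation on a cusp fiber (in the conventions of Gompf--Stipsicz, \cite{GS}). Since removing $\nu T$ and regluing $T^2\times D^2$ by a multiplicity-$p$ map is a modification supported inside $N$, and since the discussion preceding the lemma guarantees that $X_p$ depends only on $X$, $T$, $\pi$ and $p$, I am free to use whichever local model is most convenient. That model trades the fiber-region handle for a small handlebody containing one dotted circle (a new $1$-handle) together with a $2$-handle whose framing is controlled by $p$. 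The resulting diagram for $X_p$ then carries two dotted circles: the one already present in the diagram of $X$, and the one produced by the transformation.

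The core of the argument is to eliminate both dotted circles by handle cancellation, using the dotted-circle notation and the criterion that a $2$-handle running geometrically once over a dotted circle cancels it. The essential tool is the cusp itself: it supplies a twist region that lets one unknot and reroute attaching circles, creating the handle slides needed to set up cancelling pairs. I would first use this twisting, together with the $p$-framed $2$-handle coming from the transformation, to slide handles until a single $2$-handle meets the transformation's dotted circle exactly once, and then cancel that $1$/$2$ pair; the hypothesis $p>1$ is what makes the relevant framing nonzero, so that this $2$-handle is linked correctly. With the diagram simplified, I would repeat the procedure for the original dotted circle of $X$, sliding the remaining $2$-handles until one of them traverses it once, and cancel again. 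Throughout, the framings are affine in $p$ and $q$ and must be recomputed after every slide, while the $h_2$ free $2$-handles, the $h_3$ $3$-handles, and the $4$-handle ride along as spectators that can be isotoped clear of the cancellations. Because $X$, and hence $X_p$, is simply connected, each dotted circle is at least \emph{algebraically} cancellable; the real content of the lemma is to realize this \emph{geometrically}, which is exactly what the cusp twist enables. Finally I would confirm that the $3$- and $4$-handles are untouched and that the Euler characteristic and intersection form of the resulting decomposition match $X_p$.

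I expect the main obstacle to be this middle step: producing genuine geometric cancellations rather than merely algebraic ones. Arranging a $2$-handle to pass over a dotted circle precisely once demands a carefully ordered sequence of slides, and the most delicate point is ensuring that the slides used to create the first cancelling pair do not disturb the linking required for the second. Keeping track of the framings as functions of $p$ and $q$ through these moves, and verifying that the spectator handles $h_2$ and $h_3$ never obstruct the necessary isotopies, is the bookkeeping that must be carried out with care.
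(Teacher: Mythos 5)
First, a point of comparison that matters for your write-up: the paper does not prove this lemma at all. It is quoted from Yasui \cite{Y2} and used as a black box, so the only thing your attempt can be measured against is Yasui's original argument. Its broad strategy --- draw the logarithmic transformation explicitly in the Kirby diagram and then cancel every dotted circle against a $2$-handle by explicit slides --- is indeed the one you identify, so your overall plan is pointed in the right direction.

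Nevertheless there is a genuine gap: the step you postpone (``slide handles until a single $2$-handle meets the dotted circle geometrically exactly once'') is not a technical middle step but the entire content of the lemma, and you give no indication of which slides achieve it. Observe that the statement is certainly false for an arbitrary simply connected $X$ containing a $1$-handle and a cusp neighborhood --- otherwise the Gompf conjecture quoted in the introduction would be refuted in vast generality by a soft argument. What makes the cancellation possible is the precise configuration of Figure \ref{fig:diagram-2.1}: the way the $2$-handle whose framing involves $q$ links the dotted circle and the regular fiber in a manner tied to the integer $p$, which is then matched against the multiplicity $p$ of the transformation. Your argument never invokes this matching, so it cannot distinguish the true statement from the false general one. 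There are also smaller inaccuracies. The standard Kirby-diagram model of $\nu T\approx T^2\times D^2$ consists of \emph{two} dotted circles and a $0$-framed $2$-handle, and the multiplicity-$p$ logarithmic transformation adds a new $2$-handle (together with $3$- and $4$-handles that are absorbed into the existing ones), not ``one dotted circle and a $2$-handle.'' Moreover, geometric cancellation of a $1$-/$2$-handle pair is a condition on the geometric intersection number with the dotted circle, not on a framing being nonzero, so the role you assign to the hypothesis $p>1$ is misplaced. To turn your outline into a proof you would have to reproduce Yasui's explicit sequence of slides, in which the vanishing cycles of the cusp are used to unknot the $2$-handle produced by the transformation and to combine it with the $p$- and $q$-dependent $2$-handle of the figure into a geometric dual of the original dotted circle.
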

  
\section{Proof of theorem 1.2}

First, we give some propositions to prove Theorem \ref{thm:theorem1.2}.
Note that throughout this section, the framing coefficient of each 2-handle we don't make explicitly is $-1$.

\begin{prop}
\label{prop:prop-3.1}
  For \(n \geq 4\), the elliptic surface $E(n)_5$ admits a handle decomposition in Figure \ref{fig:prop.1}. In this figure, the cusp neighborhood that contains the red 2-handles is isotopic to a tubular neighborhood of a cusp fiber in $E(n)_5$.
\end{prop}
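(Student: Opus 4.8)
The plan is to realize the diagram of Figure~\ref{fig:prop.1} by starting from a standard handle picture of $E(n)$ in which a cusp neighborhood is drawn explicitly, performing the multiplicity-$5$ logarithmic transformation diagrammatically, and then reducing the result to the desired normal form by Kirby calculus. Concretely, I would begin with the handle decomposition of $E(n)$ obtained by fiber-summing $n$ copies of $E(1)$, presented so that one regular fiber $T$ sits inside a cusp neighborhood $N$ as the $0$-framed torus on which we transform, and so that the elliptic fibration on $N$ records the projection $\pi$ needed to fix the multiplicity (cf.\ \cite{G}, \cite{GS}, \cite{Y2}).

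Next, I would carry out the logarithmic transformation of multiplicity $5$ by the standard local replacement of $\nu T$ with the handle model of the transformed piece, inserting into the diagram the handles whose attaching data record the multiplicity $5$. The core of the argument is then a sequence of handle slides, isotopies, and cancellations of complementary $1$-/$2$- and $2$-/$3$-handle pairs that transports all of this into the shape of Figure~\ref{fig:prop.1} and isolates the $2$-handles drawn in red. Here the vanishing-cycle handles built into the cusp (the $-1$-framed handles of $N$) are the principal tools for sliding, while the many complementary $2$-handles contributed by the $E(1)$-summands are what get cancelled or absorbed.

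Finally, I would establish the isotopy assertion: that the cusp neighborhood carrying the red $2$-handles is isotopic in $E(n)_5$ to a tubular neighborhood of a cusp fiber. This amounts to tracking the cusp fiber, its $0$-framing, and its two vanishing cycles through every move above, and checking that in the final diagram the red handles reconstitute exactly the configuration of Figure~\ref{fig:cusp_neighborhood}. Identifying this cusp neighborhood is essential, since it is the locus on which the multiplicity-$6$ transform of Lemma~\ref{thm:lemma2.1} will subsequently be performed to deduce the $E(n)_{5,6}$ case of Theorem~\ref{thm:theorem1.2}.

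The step I expect to be the main obstacle is the bookkeeping in the second paragraph: performing the transform correctly and keeping the cusp structure visible and intact through a long chain of handle slides, while arranging that the surviving handles match Figure~\ref{fig:prop.1} both in number and in attaching data. I anticipate that the hypothesis $n \geq 4$ enters precisely here—there must be enough complementary $2$-handles from the fiber summands to absorb those produced by the multiplicity-$5$ transform and to carry out the required cancellations—so that for $n < 4$ the reduction to the normal form of Figure~\ref{fig:prop.1} cannot be completed.
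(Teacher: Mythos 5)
Your plan correctly identifies the endpoints (start from a standard picture involving $E(n)$ and a cusp neighborhood, end at Figure \ref{fig:prop.1}), but it is missing the one idea that actually carries the proof: the combinatorial manipulation of the vanishing-cycle word. The paper does not perform the logarithmic transformation diagrammatically at all; it simply starts from the known handle picture of $E(n)_p$ given in \cite{GS} (Figure \ref{fig:diagram-0} with $p=5$). From there, the chain of $(-1)$-framed $2$-handles is encoded as the word $(ab)^{6n}$, each vertical handle being the letter $a$ and each horizontal one the letter $b$, and the substitution $aba\leftrightarrow bab$ is realized by an explicit pair of handle slides plus isotopy (Figure \ref{fig:diagram-3}). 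The entire content of the proof is the specific sequence of such substitutions converting $(ab)^{6n}$ into $a^4b^{n+1}a^5b^5(ab)^{5n-9}$, followed by $(n-2)$ applications of the $(ab)^5$-conversion of Figure \ref{fig:diagram-7}(b) and the omission of the resulting superfluous $(-1)$-framed knots. Saying that ``a sequence of handle slides, isotopies, and cancellations transports all of this into the shape of Figure \ref{fig:prop.1}'' restates the goal rather than supplying this mechanism, so as written the proposal does not prove the main step.

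Two further points. First, you invoke cancellation of complementary $1$-/$2$- and $2$-/$3$-handle pairs, but no such cancellation occurs in this proposition: the proof explicitly refrains from sliding the $1$-handles, the $0$-framed $2$-handle and the red $2$-handles, and the $1$-handles survive into Figure \ref{fig:prop.1}; they are eliminated only later, via Lemma \ref{thm:lemma2.1} and a second logarithmic transformation of multiplicity $6$. This restraint is also what makes the isotopy assertion about the cusp neighborhood essentially automatic, since the red handles are never moved. Second, your guess about where $n\geq 4$ enters is close in spirit but wrong in mechanism: the bound comes from the word conversion itself, which requires a segment $\underbrace{ab\cdots ab}_{2n-8}$ of nonnegative length (and nonnegative counts such as $4n-12$ of omitted knots at the end), not from a supply of cancellable handle pairs contributed by fiber summands.
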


\begin{figure}[htbp]
\begin{center}
  \includegraphics[width=11cm]{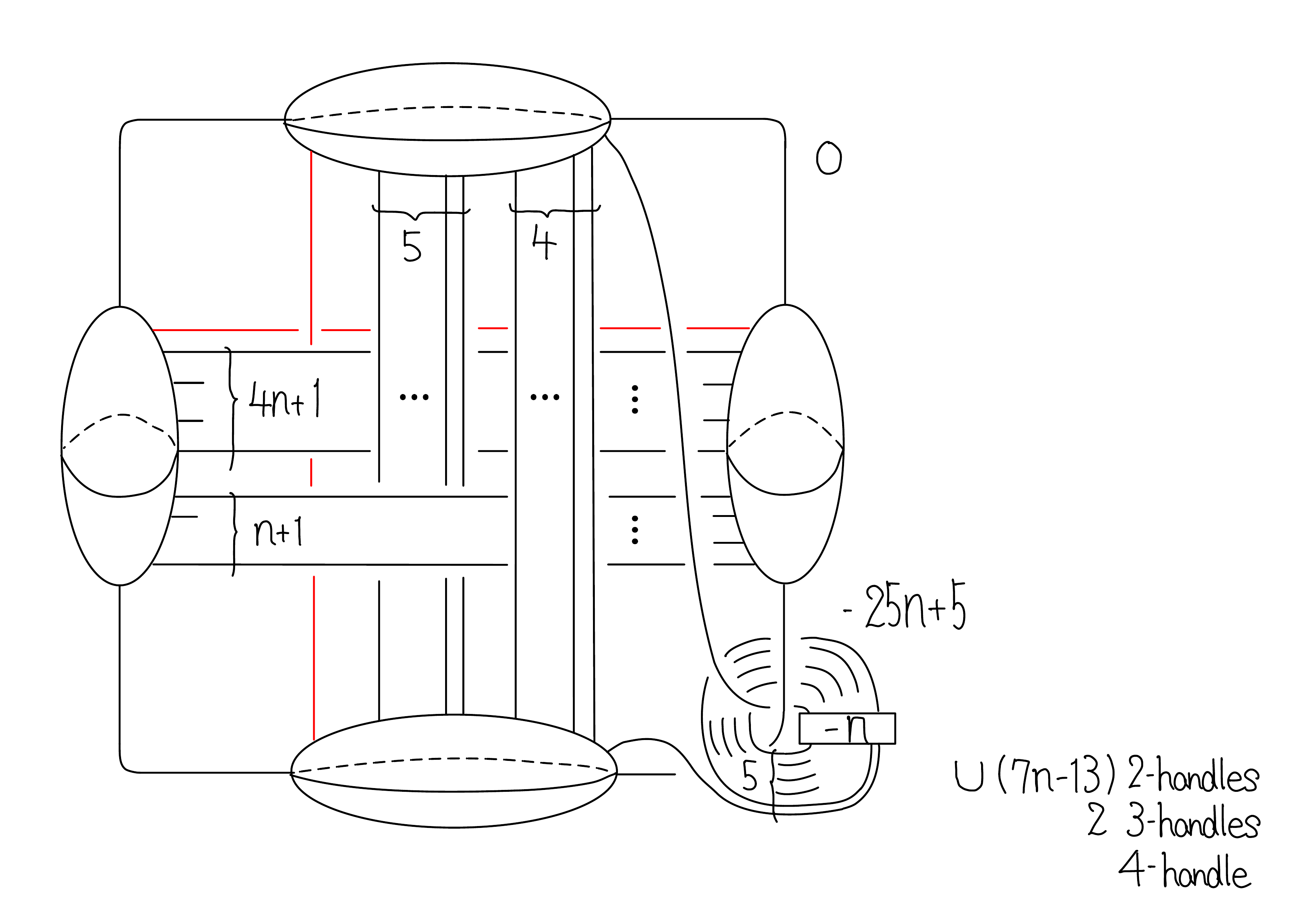} 
 \caption{}
 \label{fig:prop.1}
\end{center}
\end{figure}

\begin{proof}
  
The elliptic surface $E(n)_{p}$ is given in Figure \ref{fig:diagram-0} (see \cite{GS}), and we begin with Figure \ref{fig:diagram-1}, that is, the $p=5$ case of Figure \ref{fig:diagram-0}.
Note that in the proof of this proposition, we do not slide the 1-handles, the $0$-framed 2-handle and the red 2-handles in Figure \ref{fig:diagram-1}. 
First, we focus on Figure \ref{fig:diagram-3} (a) which is a part of Figure \ref{fig:diagram-1}.
We slide one vertical 2-handle over the other, and isotope the diagram to get the bottom-left picture of Figure \ref{fig:diagram-3}.
Sliding the $(-2)$-framed unknot over the horizontal $(-1)$-framed knot, we obtain the bottom-right.
We isotope the new $(-1)$-framed knot and get Figure \ref{fig:diagram-3} (b).
This deformation is introduced by Yasui in his paper \cite{Y2}.
Then, we can regard it as a conversion of the word $aba$ into the word $bab$ (\textit{i.e.}, we can consider each vertical (\textit{resp.} horizontal) 2-handle in Figure \ref{fig:diagram-3} (a) and (b) to be the letter $a$ (\textit{resp.} $b$)).
Furthermore, this procedure is reversible and we can change Figure \ref{fig:diagram-3} (b) into Figure \ref{fig:diagram-3} (a) by handle slides and isotopy.
Next, using the conversions $aba\rightarrow bab$ and $bab\rightarrow aba$, we change the word $(ab)^{6n}$ and discover the word $a^4b^{n+1}a^5b^5(ab)^{5n-9}$ for the following conversions:
\begin{align*}
 abababababab\underbrace{ab\cdots ab}_{12n-12}
&\rightarrow 
  aabaabbabbab\underbrace{ab\cdots ab}_{12n-12}\\
&=
aabaabbabbab\underbrace{ab\cdots ab}_{2n-8}ab\underbrace{abababababab}_{12}\underbrace{ab\cdots ab}_{10n-18}\\
&\rightarrow 
aa\underline{b}aabb\underline{a}bb\underline{a}b\underbrace{\underline{a}b\cdots \underline{a}b}_{2n-8}a\underline{b}\underbrace{aa\underline{b}aabb\underline{a}bb\underline{a}b}_{12}\underbrace{ab\cdots ab}_{10n-18}\\
&\sim
aaaa\underbrace{bb\cdots b}_{n+1}\underbrace{aaaaa}_{5}\underbrace{bbbbb}_{5}\underbrace{ab\cdots ab}_{10n-18}
\end{align*}
Here, we omit underlined letters in the last conversion.
Therefore, in Figure \ref{fig:diagram-1}, we perform handle slides and isotopies indicated by these conversions and get Figure \ref{fig:diagram-6}.
In addition, we can convert the word $(ab)^5$, which indicates the left diagrams of Figure \ref{fig:diagram-7} as follows:  
\begin{align*}
ababababab
&\rightarrow
aabababbab
&
ababababab
&\rightarrow
abaabababb\\
&\rightarrow
aaabaababa
&
&\rightarrow
bababbabbb\\
&\rightarrow
aaabaaabaa
&
&\rightarrow
bbabbbabbb
\end{align*}
Consequently, we get the right diagrams in Figure \ref{fig:diagram-7}.
Note that we use the conversion $(ab)^5\rightarrow (a^3b)^2a^2$ not in this proof but in the proof of Proposition \ref{prop:prop-3.3}.   
Finally, in Figure \ref{fig:diagram-6}, we repeat the deformation in Figure \ref{fig:diagram-7} (b) $(n-2)$ times, so that we can change $(5n-10)$ vertical 2-handles and $(5n-10)$ horizontal ones into $(2n-4)$ vertical ones and $(8n-16)$ horizontal ones.  
Then, we omit $(2n-4)$ vertical and $(4n-12)$ horizontal knots with $(-1)$ framings and get Figure \ref{fig:prop.1}.  
\end{proof}

\begin{prop}
 \label{prop:prop-3.2}
  For \(n \geq 5\), the elliptic surface $E(n)_6$ admits a handle decomposition in Figure \ref{fig:prop.2}. In this figure, the cusp neighborhood that contains the red 2-handles is isotopic to a tubular neighborhood of a cusp fiber in $E(n)_6$.
\end{prop}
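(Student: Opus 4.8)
The plan is to transcribe, for multiplicity $p=6$, the argument of Proposition \ref{prop:prop-3.1}. I begin with the $p=6$ instance of Gompf--Stipsicz's diagram of $E(n)_{p}$ (Figure \ref{fig:diagram-0}; see \cite{GS}), keeping the 1-handles, the $0$-framed 2-handle, and the red 2-handles in the cusp neighborhood fixed at every stage, exactly as in Proposition \ref{prop:prop-3.1}. Writing $a$ for a vertical 2-handle and $b$ for a horizontal one, the vertical and horizontal 2-handles again form the subword $(ab)^{6n}$, and every handle slide I perform is an instance of the reversible move $aba\leftrightarrow bab$ of Figure \ref{fig:diagram-3} or of the block move of Figure \ref{fig:diagram-7}.

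The heart of the proof is the word computation, which I run in parallel with the $p=5$ case. There the target head was $a^{4}b^{n+1}a^{5}b^{5}=a^{p-1}b^{n+1}a^{p}b^{p}$, produced from two copies of $(ab)^{6}=(ab)^{p+1}$; for $p=6$ the corresponding head is $a^{5}b^{n+1}a^{6}b^{6}$, produced from two copies of $(ab)^{7}$. Concretely, I split $(ab)^{6n}$ as
$$(ab)^{6n}=(ab)^{7}\,(ab)^{n-5}\,(ab)\,(ab)^{7}\,(ab)^{5n-10},$$
then convert each of the two $(ab)^{7}$ blocks by the braid moves into a 14-letter pattern three of whose letters (two $a$'s and one $b$) are $(-1)$-framed unknots, and delete them so that each block contributes $a^{5}b^{6}$. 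Combining the first block with the run $(ab)^{n-5}$, all of whose $a$'s are deleted so that it contributes $b^{n-5}$, gives $a^{5}b^{n+1}$; the connector $(ab)$ together with the second block gives $a\cdot a^{5}b^{6}=a^{6}b^{6}$. The result is the word $a^{5}b^{n+1}a^{6}b^{6}(ab)^{5n-10}$, the analogue of Figure \ref{fig:diagram-6}.

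I then apply the block conversion of Figure \ref{fig:diagram-7}(b), which replaces $(ab)^{5}$ by a word with only two vertical handles, a total of $(n-2)$ times to the tail $(ab)^{5n-10}$, turning its $5n-10$ vertical and $5n-10$ horizontal handles into the $2n-4$ vertical and $8n-16$ horizontal handles of $(n-2)$ such blocks. Omitting the resulting $(-1)$-framed vertical and horizontal unknots then yields Figure \ref{fig:prop.2}. Since no slide ever disturbs the red 2-handles, the cusp neighborhood containing them stays isotopic to a tubular neighborhood of a cusp fiber in $E(n)_6$, as claimed.

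The step I expect to be the main obstacle is the bookkeeping in the word computation, and in particular locating the lower bound on $n$. The binding constraint in the split above is that the middle run $(ab)^{n-5}$ have nonnegative length, which forces $n\geq 5$; this is exactly the shift from the bound $n\geq 4$ of the $p=5$ case, whose corresponding run was $(ab)^{n-4}$. Verifying that the five pieces of the split are simultaneously nonnegative, that each deleted letter really is a $(-1)$-framed unknot legitimately removed by the moves of Figures \ref{fig:diagram-3} and \ref{fig:diagram-7}, and exhibiting the explicit conversion of $(ab)^{7}$ into its 14-letter pattern, are the delicate points; granting these, the handle slides dictated by the word conversions produce Figure \ref{fig:prop.2} directly.
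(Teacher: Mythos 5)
Your overall strategy is the paper's: start from the $p=6$ case of Figure \ref{fig:diagram-0}, freeze the 1-handles, the $0$-framed 2-handle and the red 2-handles, rewrite the word $(ab)^{6n}$ by the moves of Figure \ref{fig:diagram-3}, apply the Figure \ref{fig:diagram-7}(b) block conversion $(n-2)$ times, and omit $(-1)$-framed unknots. But the entire mathematical content of the proposition is the explicit word conversion, and that is exactly what you defer (``granting these\dots''). Worse, the target you guess by analogy is not the one the paper derives. You predict the head $a^{5}b^{n+1}a^{6}b^{6}$ built from two $(ab)^{7}$ blocks, on the pattern $a^{p-1}b^{n+1}a^{p}b^{p}$; the paper instead keeps the same 12-letter blocks $(ab)^{6}$ as in the $p=5$ case (converting one to $aaa\underline{b}aabbb\underline{a}bb$ and the other to $aaa\underline{b}aa\underline{bbb}abb$) and arrives at $a^{5}b^{n+1}a^{6}b^{2}(ab)^{5n-8}$. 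Indeed the pattern across Propositions \ref{prop:prop-3.2}--\ref{prop:prop-3.4} is $a^{p-1}b^{n+1}a^{p}b^{p-4}$, not $a^{p-1}b^{n+1}a^{p}b^{p}$; only the $p=5$ case has $b^{p}$. Since the proposition asserts a specific diagram, producing a different word would not yield Figure \ref{fig:prop.2} even if your moves were valid, and it is not established that $(ab)^{7}$ can be converted into a 14-letter pattern with three deletable $(-1)$-framed unknots at all: which letters may be omitted is determined by the particular slide sequences, not by letter counting.

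A secondary error: you locate the bound $n\geq 5$ in the length of the middle run, which you take to be $(ab)^{n-5}$. In the paper the middle run is $(ab)^{n-4}$ (the segment of length $2n-8$), the same as in the $p=5$ case, and the constraint $n\geq 5$ instead comes from the final bookkeeping, namely that the $(3n-14)$ horizontal $(-1)$-framed knots to be omitted exist. So the place where the hypothesis is actually used is one of the steps your outline does not carry out.
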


\begin{figure}[htbp]
\begin{center}
  \includegraphics[width=10cm]{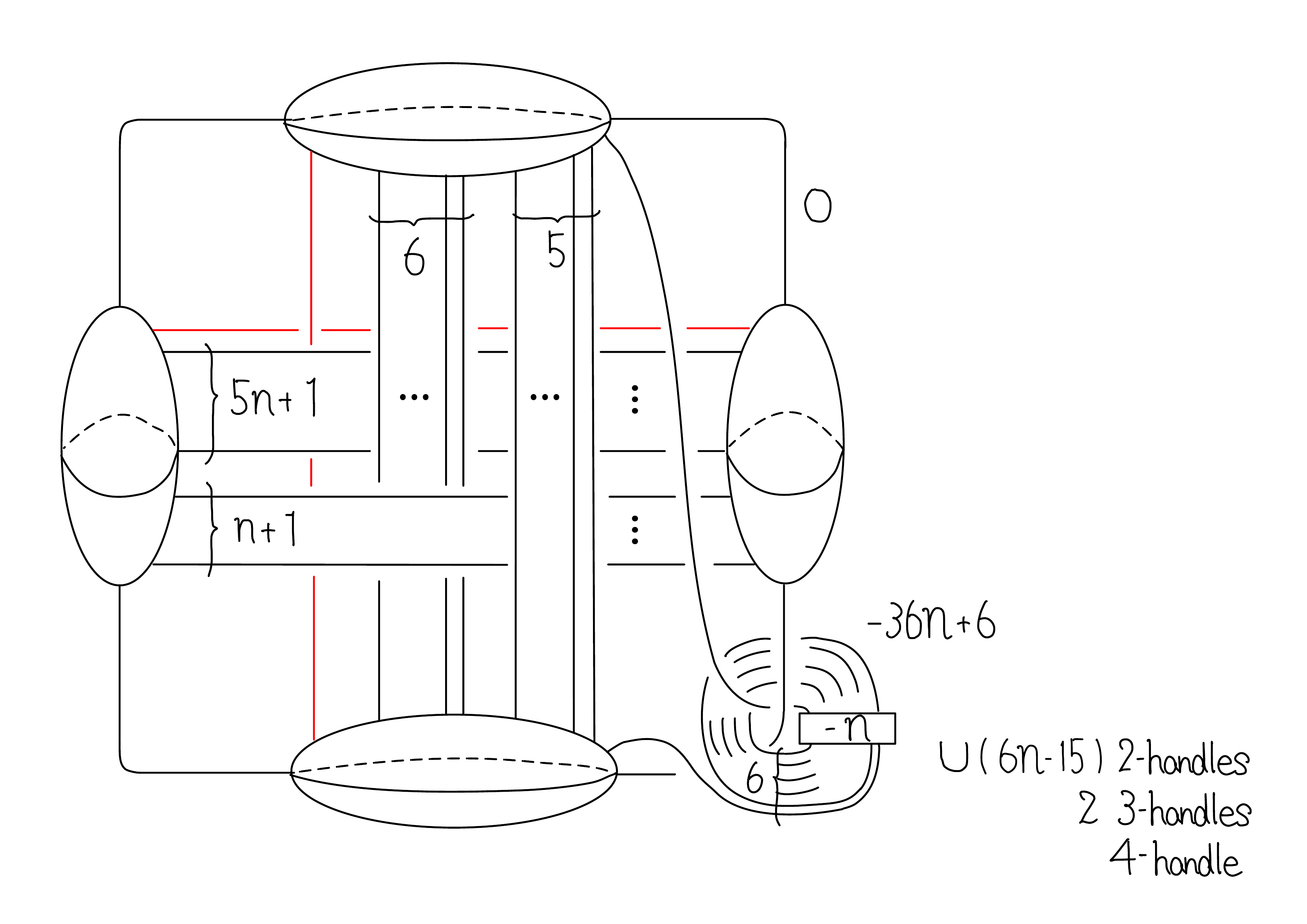} 
 \caption{}
 \label{fig:prop.2}
\end{center}
\end{figure} 

\begin{proof}
We begin with Figure \ref{fig:diagram-8}, that is, the $p=6$ case of Figure \ref{fig:diagram-0}. 
Note that in the proof of this proposition, we do not slide the 1-handles, the 0-framed 2-handle and the red 2-handles in Figure \ref{fig:diagram-8}. 
Using the conversions $aba\rightarrow bab$ and $bab\rightarrow aba$, we change the word $(ab)^{6n}$ and discover the word $a^5b^{n+1}a^6b^2(ab)^{5n-8}$ for the following conversions:
\begin{align*}
abababababab\underbrace{ab\cdots ab}_{12n-12}
&\rightarrow
aabababbabab\underbrace{ab\cdots ab}_{12n-12}\\
&\rightarrow
aaabaabbbabb\underbrace{ab\cdots ab}_{2n-8}\underbrace{abababababab}_{12}\underbrace{ab\cdots ab}_{10n-16}\\
&\rightarrow
aaabaabbbabb\underbrace{ab\cdots ab}_{2n-8}\underbrace{aabababbabab}_{12}\underbrace{ab\cdots ab}_{10n-16}\\
&\rightarrow
aaa\underline{b}aabbb\underline{a}bb\underbrace{\underline{a}b\cdots \underline{a}b}_{2n-8}\underbrace{aaa\underline{b}aa\underline{bbb}abb}_{12}\underbrace{ab\cdots ab}_{10n-16}\\
&\sim
\underbrace{aaaaa}_{5}\underbrace{b\cdots b}_{n+1}\underbrace{aaaaaa}_{6}bb\underbrace{ab\cdots ab}_{10n-16}\\
\end{align*}
Here, we omit underlined letters in the last conversion.
Therefore, in Figure \ref{fig:diagram-8}, we perform handle slides and isotopies indicated by these conversions and obtain Figure \ref{fig:diagram-13}.
In Figure \ref{fig:diagram-13}, we repeat the deformation in Figure \ref{fig:diagram-7} (b) $(n-2)$ times, so that we can change $(5n-10)$ vertical 2-handles and $(5n-10)$ horizontal ones into $(2n-4)$ vertical ones and $(8n-16)$ horizontal ones.  
Then, we omit $(2n-3)$ vertical and $(3n-14)$ horizontal knots with $(-1)$ framings and get Figure \ref{fig:prop.2}.  
\end{proof}

\begin{prop}
\label{prop:prop-3.3}
  For \(n \geq 9\), the elliptic surface $E(n)_7$ admits a handle decomposition in Figure \ref{fig:prop.3}. In this figure, the cusp neighborhood that contains the red 2-handles is isotopic to a tubular neighborhood of a cusp fiber in $E(n)_7$.
\end{prop}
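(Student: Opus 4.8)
The plan is to run the same template as in the proofs of Propositions \ref{prop:prop-3.1} and \ref{prop:prop-3.2}, viewing the vertical and horizontal $(-1)$-framed 2-handles as the letters $a$ and $b$ and realizing handle slides as word conversions. First I would set up the starting diagram: take the $p=7$ case of Figure \ref{fig:diagram-0}, a handle decomposition of $E(n)_7$ containing the 1-handles, the $0$-framed 2-handle, the red 2-handles spanning the cusp neighborhood, and $6n$ vertical together with $6n$ horizontal $(-1)$-framed 2-handles, which encode the word $(ab)^{6n}$. As in the earlier proofs, I will never slide the 1-handles, the $0$-framed 2-handle, or the red 2-handles, so that the cusp neighborhood survives intact and the final picture genuinely presents $E(n)_7$ together with an embedded tubular neighborhood of a cusp fiber.

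The heart of the argument is the word reduction. Using the reversible move $aba \leftrightarrow bab$ of Figure \ref{fig:diagram-3}, I would rewrite $(ab)^{6n}$ so as to collect the letters into long monochromatic blocks and then cancel the handles corresponding to the underlined (cancelable) letters, exactly as in the displayed computations for $p=5$ and $p=6$. By analogy with those cases I expect a canonical form $a^{6} b^{n+1} a^{7} b^{c} (ab)^{d}$, where $c$ and $d$ are pinned down by conservation of word length ($=12n$) together with the net change in the $a$- and $b$-counts produced by each $aba \to bab$ move. Since $p=7$ is odd, as for $p=5$ I then expect to invoke the conversion $(ab)^{5}\rightarrow (a^{3}b)^{2}a^{2}$ of Figure \ref{fig:diagram-7} (a) --- the one explicitly flagged in the proof of Proposition \ref{prop:prop-3.1} as being needed here --- to absorb a leftover $(ab)^{5}$ block, and then to apply the deformation of Figure \ref{fig:diagram-7} (b) a total of $(n-2)$ times to trade $(5n-10)$ vertical and $(5n-10)$ horizontal handles for $(2n-4)$ vertical and $(8n-16)$ horizontal ones.

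Finally I would delete the appropriate number of $(-1)$-framed unknotted 2-handles to arrive at Figure \ref{fig:prop.3}. The main obstacle is combinatorial bookkeeping rather than new topology: one must check that the entire sequence of $aba\leftrightarrow bab$ and $(ab)^{5}$ conversions can actually be carried out inside $(ab)^{6n}$, i.e. that there are always enough unbroken $(ab)$ blocks to serve as raw material for each move. It is precisely this requirement that forces the hypothesis $n\geq 9$, which is larger than the thresholds $n\geq 4$ and $n\geq 5$ for $p=5,6$ because the additional $(ab)^{5}\to(a^{3}b)^{2}a^{2}$ step consumes extra $(ab)$ blocks. I would verify the length and the $a$-/$b$-counts at each stage and confirm that every handle being slid is disjoint from the red 2-handles, so that the cusp neighborhood remains isotopic to a tubular neighborhood of a cusp fiber, completing the identification of the resulting diagram with Figure \ref{fig:prop.3}.
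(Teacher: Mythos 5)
Your overall strategy coincides with the paper's: start from the $p=7$ case of Figure \ref{fig:diagram-0}, never slide the 1-handles, the $0$-framed 2-handle or the red 2-handles, rewrite the word $(ab)^{6n}$ into a block form $a^{6}b^{n+1}a^{7}b^{c}(ab)^{d}$, apply the deformation of Figure \ref{fig:diagram-7} (b) a total of $(n-2)$ times, and finally omit split $(-1)$-framed unknots to reach Figure \ref{fig:prop.3}. The difficulty is that the mathematical content of the paper's proof is exactly the explicit chain of conversions producing the specific word $a^{6}b^{n+1}a^{7}b^{3}(ab)^{5n-9}$, and this is the step you defer to ``combinatorial bookkeeping'' without carrying it out. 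Two points in your plan for recovering it would fail as stated. First, you propose to pin down the exponents $c$ and $d$ by ``conservation of word length ($=12n$)''; but the word length is not conserved, since the last step of the rewriting deletes the underlined letters (the target word $a^{6}b^{n+1}a^{7}b^{3}(ab)^{5n-9}$ has length $11n-1$, not $12n$), so this bookkeeping principle gives the wrong answer. Second, you invoke only the conversion $(ab)^{5}\rightarrow(a^{3}b)^{2}a^{2}$ from Figure \ref{fig:diagram-7} (a), whereas the paper's derivation also uses the companion conversion $(ab)^{5}\rightarrow bb(ab^{3})^{2}$, applied to two further $(ab)^{6}$ blocks to manufacture the long runs of $b$'s; without it the cancellations that produce the blocks $a^{6}$, $b^{n+1}$ and $a^{7}$ cannot be arranged.

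Your heuristic for the hypothesis $n\geq 9$ is essentially right in spirit --- in the paper it arises because one of the intermediate subwords is $\underbrace{ab\cdots ab}_{2n-18}$, which requires $2n-18\geq 0$ --- but this can only be confirmed by actually exhibiting the conversion sequence, which your proposal does not do. As written, the proposal is a correct template (the same one the paper follows) rather than a proof: to complete it you must display the concrete sequence of $aba\leftrightarrow bab$ and $(ab)^{5}$ conversions, verify at each stage which letters become deletable, and check that the residual tail is $(ab)^{5n-9}$ so that the $(n-2)$-fold application of Figure \ref{fig:diagram-7} (b) and the final omission of $(2n-4)$ vertical and $(2n-14)$ horizontal $(-1)$-framed knots indeed yields Figure \ref{fig:prop.3}.
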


\begin{figure}[htbp]
\begin{center}
  \includegraphics[width=10cm]{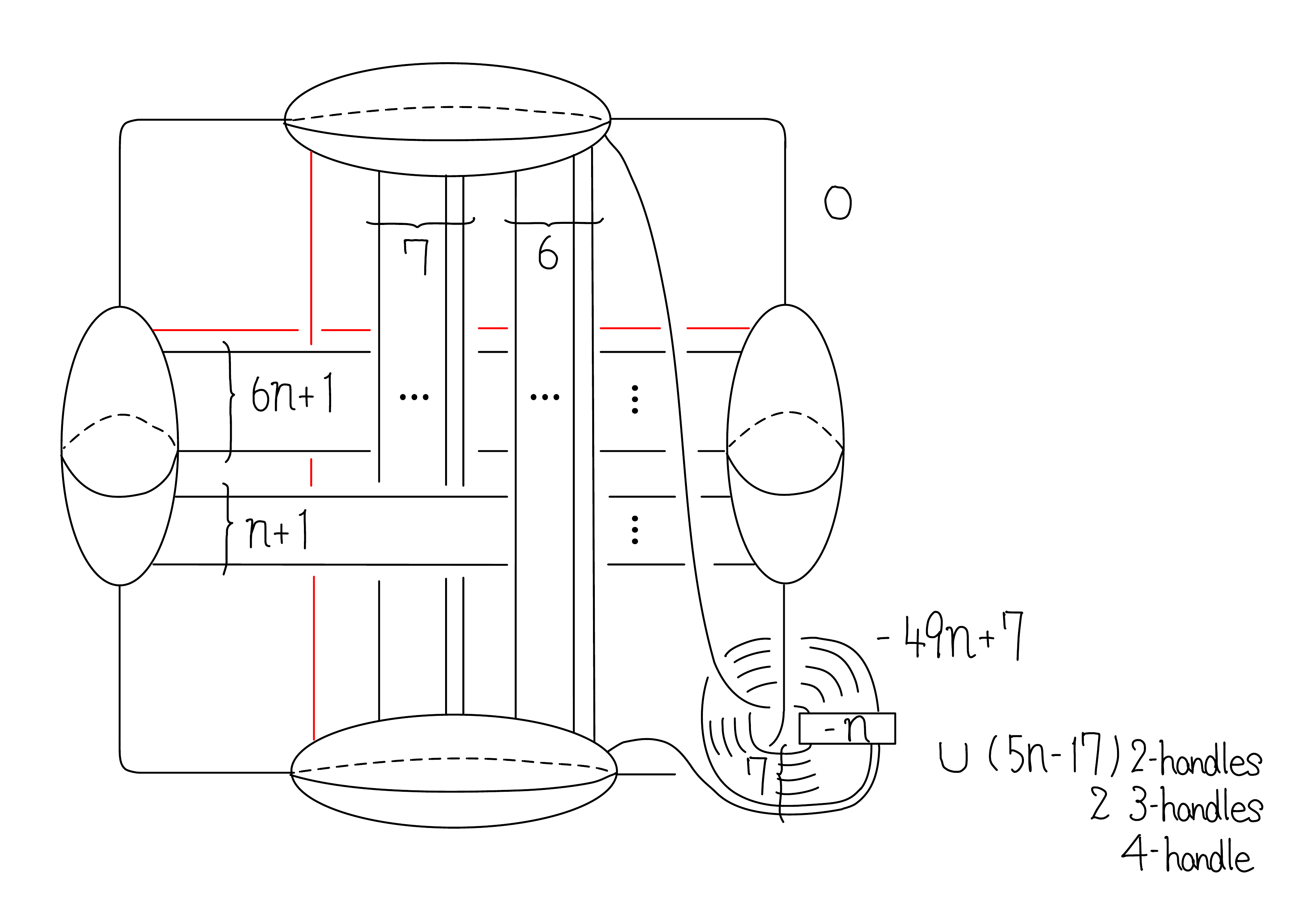} 
 \caption{}
 \label{fig:prop.3}
\end{center}
\end{figure}

\begin{proof}
We begin with Figure \ref{fig:diagram-15}, that is, the $p=7$ case of Figure \ref{fig:diagram-0}.
Note that in the  proof of this proposition, we do not slide the 1-handles, the 0-framed 2-handle and the red 2-handles in Figure \ref{fig:diagram-15}.
Using the conversions $aba\rightarrow bab$, $bab\rightarrow aba$, $(ab)^5\rightarrow (a^3b)^2aa$ and $(ab)^5\rightarrow bb(ab^3)^2$, we change the word $(ab)^{6n}$ and discover the word $a^6b^{n+1}a^7b^3(ab)^{5n-9}$ for the following conversions:
\begin{align*}
&
abababababab\underbrace{ab\cdots ab}_{12n-12}
\rightarrow
aaabaaabaaab\underbrace{ab\cdots ab}_{2n-18}\underbrace{abababababab}_{12}\underbrace{ab\cdots ab}_{10n-6}\\
&\rightarrow
aaabaaabaaab\underbrace{ab\cdots ab}_{2n-18}\underbrace{bbabbbabbbab}_{12}\underbrace{ab\cdots ab}_{10n-6}\\
&\rightarrow
aaabaaabaaab\underbrace{ab\cdots ab}_{2n-18}\underbrace{bbabbbabbbab}_{12}\underbrace{abababababab}_{12}\underbrace{ab\cdots ab}_{10n-18}\\
&\rightarrow
aaa\underline{b}aaab\underline{aaa}b\underbrace{\underline{a}b\cdots \underline{a}b}_{2n-18}\underbrace{bb\underline{a}bbb\underline{a}bbba\underline{b}}_{12}\underbrace{aa\underline{b}aaa\underline{b}
ab\underline{a}bb}_{12}\underbrace{ab\cdots ab}_{10n-18}\\
&\sim
\underbrace{a\cdots a}_{6}\underbrace{b\cdots b}_{n+1}\underbrace{aaaaaaa}_{7}bbb\underbrace{ab\cdots ab}_{10n-18}\\
\end{align*}
Here, we omit underlined letters in the last conversion.
Therefore, in Figure \ref{fig:diagram-15}, we perform handle slides and isotopies indicated by these conversions and obtain Figure \ref{fig:diagram-20}.
In Figure \ref{fig:diagram-20}, we repeat the deformation in Figure \ref{fig:diagram-7} (b) $(n-2)$ times, so that we can change $(5n-10)$ vertical 2-handles and $(5n-10)$ horizontal ones into $(2n-4)$ vertical ones and $(8n-16)$ horizontal ones.  
Then, we omit $(2n-4)$ vertical and $(2n-14)$ horizontal knots with $(-1)$ framings and get Figure \ref{fig:prop.3}. 
\end{proof}

\begin{prop}
\label{prop:prop-3.4}
  For \(n \geq 24\), the elliptic surface $E(n)_8$ admits a handle decomposition in Figure \ref{fig:prop.4}. In this figure, the cusp neighborhood that contains the red 2-handles is isotopic to a tubular neighborhood of a cusp fiber in $E(n)_8$.
\end{prop}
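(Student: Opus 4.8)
The plan is to follow the proofs of Propositions~\ref{prop:prop-3.1}--\ref{prop:prop-3.3} line for line, replacing only the combinatorial bookkeeping of the word moves. I would start from the $p=8$ specialization of the standard diagram (Figure~\ref{fig:diagram-0}), whose cusp-region $2$-handles are recorded by the word $(ab)^{6n}$, the letter $a$ standing for a vertical and $b$ for a horizontal $(-1)$-framed $2$-handle. As in the earlier propositions the $1$-handles, the $0$-framed $2$-handle and the red $2$-handles are never slid; consequently the red handles continue to bound a cusp neighborhood isotopic to a tubular neighborhood of a cusp fiber, which is the second assertion of the proposition. Every individual move below is one of the four handle operations already justified: the braid slides $aba\rightarrow bab$ and $bab\rightarrow aba$ from Figure~\ref{fig:diagram-3}, and the compound moves $(ab)^5\rightarrow(a^3b)^2a^2$ and $(ab)^5\rightarrow b^2(ab^3)^2$ from Figure~\ref{fig:diagram-7}.

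The heart of the proof is a rewriting of $(ab)^{6n}$, via these four moves together with the deletion of canceling $(-1)$-framed handles (the underlined letters), into a word of the shape
\[
  a^{7}\,b^{n+1}\,a^{8}\,b^{m}\,(ab)^{5n-\ell}
\]
for suitable small constants $m$ and $\ell$, exactly parallel to the targets $a^{4}b^{n+1}a^{5}b^{5}(ab)^{5n-9}$, $a^{5}b^{n+1}a^{6}b^{2}(ab)^{5n-8}$ and $a^{6}b^{n+1}a^{7}b^{3}(ab)^{5n-9}$ produced for $p=5,6,7$. Concretely I would first group the initial $(ab)^{6}$ into $(a^3b)^3$, then repeatedly import $(ab)^{12}$-blocks from the tail and rewrite them with the two compound moves so as to assemble, one block at a time, the long vertical runs $a^{7}$ and $a^{8}$ flanking the horizontal run $b^{n+1}$. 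Building an $a^{8}$ run forces more $(ab)^{12}$-blocks to be imported and threaded together than were needed for $p=7$; the middle \emph{reservoir} segment that must survive this threading has the form $(ab)^{n-24}$ (an underbrace of length $2n-48$), and its nonnegativity is precisely the hypothesis $n\geq 24$, just as the segment $(ab)^{n-9}$ forced $n\geq 9$ in Proposition~\ref{prop:prop-3.3}.

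Once this target word is realized by the corresponding handle slides and isotopies, I obtain an intermediate diagram (the analogue of Figures~\ref{fig:diagram-6}, \ref{fig:diagram-13} and \ref{fig:diagram-20}) and then apply the deformation of Figure~\ref{fig:diagram-7}(b) a total of $(n-2)$ times to the trailing $(ab)$-run, which as before turns $(5n-10)$ vertical and $(5n-10)$ horizontal handles into $(2n-4)$ vertical and $(8n-16)$ horizontal ones. Cancelling the $(-1)$-framed knots that these moves have freed then yields Figure~\ref{fig:prop.4}; the requirement that the number of cancellable knots stay nonnegative produces only weaker inequalities on $n$, all subsumed by $n\geq 24$.

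The one genuinely delicate step is the rewriting displayed above: I must choose an order of braid and compound moves that simultaneously creates both runs $a^{7}$ and $a^{8}$, leaves the tail in the clean form $(ab)^{5n-\ell}$ demanded by the Figure~\ref{fig:diagram-7}(b) step, and never calls for more $(ab)$-pairs than the reservoir holds---so that the governing inequality is exactly $n\geq 24$ and not something larger. After the word is fixed nothing new is required: each letter-move is a handle slide already established in Proposition~\ref{prop:prop-3.1}, and the terminal deletions are ordinary $(-1)$-framed cancellations, so the proposition follows with no further input.
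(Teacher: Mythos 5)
Your strategy coincides with the paper's: start from the $p=8$ case of Figure \ref{fig:diagram-0} (Figure \ref{fig:diagram-21}), never slide the 1-handles, the 0-framed 2-handle or the red 2-handles (which is what keeps the cusp neighborhood intact and gives the second assertion), rewrite $(ab)^{6n}$ into a word of the shape $a^7b^{n+1}a^8b^{m}(ab)^{5n-\ell}$, apply the Figure \ref{fig:diagram-7}(b) deformation $(n-2)$ times to the trailing $(ab)$-run, and cancel the freed $(-1)$-framed knots to reach Figure \ref{fig:prop.4}. You even locate the source of the hypothesis correctly: the paper's derivation does pass through a reservoir segment $\underbrace{ab\cdots ab}_{2n-48}$, whose nonemptiness is exactly $n\geq 24$.

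The problem is that the step you yourself flag as ``genuinely delicate''---exhibiting an explicit sequence of braid and compound moves that actually reaches the target word---is the entire mathematical content of this proposition, and you have not carried it out. The paper's proof is essentially one displayed computation producing $a^7b^{n+1}a^8b^4(ab)^{5n-6}$ (so $m=4$, $\ell=6$), obtained by importing and rewriting several $(ab)^{12}$-blocks with $(ab)^5\rightarrow bb(ab^3)^2$ and then deleting a carefully chosen set of underlined letters; each of Propositions \ref{prop:prop-3.1}--\ref{prop:prop-3.3} needed a different such sequence, so there is no ``same as before'' to appeal to. Without that derivation one cannot verify (i) that the target word is reachable with the allowed moves, (ii) the constants $m$ and $\ell$, which determine the diagram one lands on, (iii) the counts of cancellable knots at the end ($(2n-1)$ vertical and $(n-10)$ horizontal in the paper), which must match Figure \ref{fig:prop.4}, or (iv) that the binding constraint really is $n\geq 24$ and not a larger bound forced by some other bottleneck. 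Two smaller discrepancies reinforce this: your proposed opening move (grouping the initial $(ab)^6$ into $(a^3b)^3$) is the first step of the $p=7$ proof, not the $p=8$ one, whose first conversion is $(ab)^6\rightarrow aabaaabaabab$; and the paper's $p=8$ argument uses only the moves $aba\rightarrow bab$, $bab\rightarrow aba$ and $(ab)^5\rightarrow bb(ab^3)^2$. So what you have is the correct plan with the decisive computation missing.
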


\begin{figure}[htbp]
\begin{center}
  \includegraphics[width=11cm]{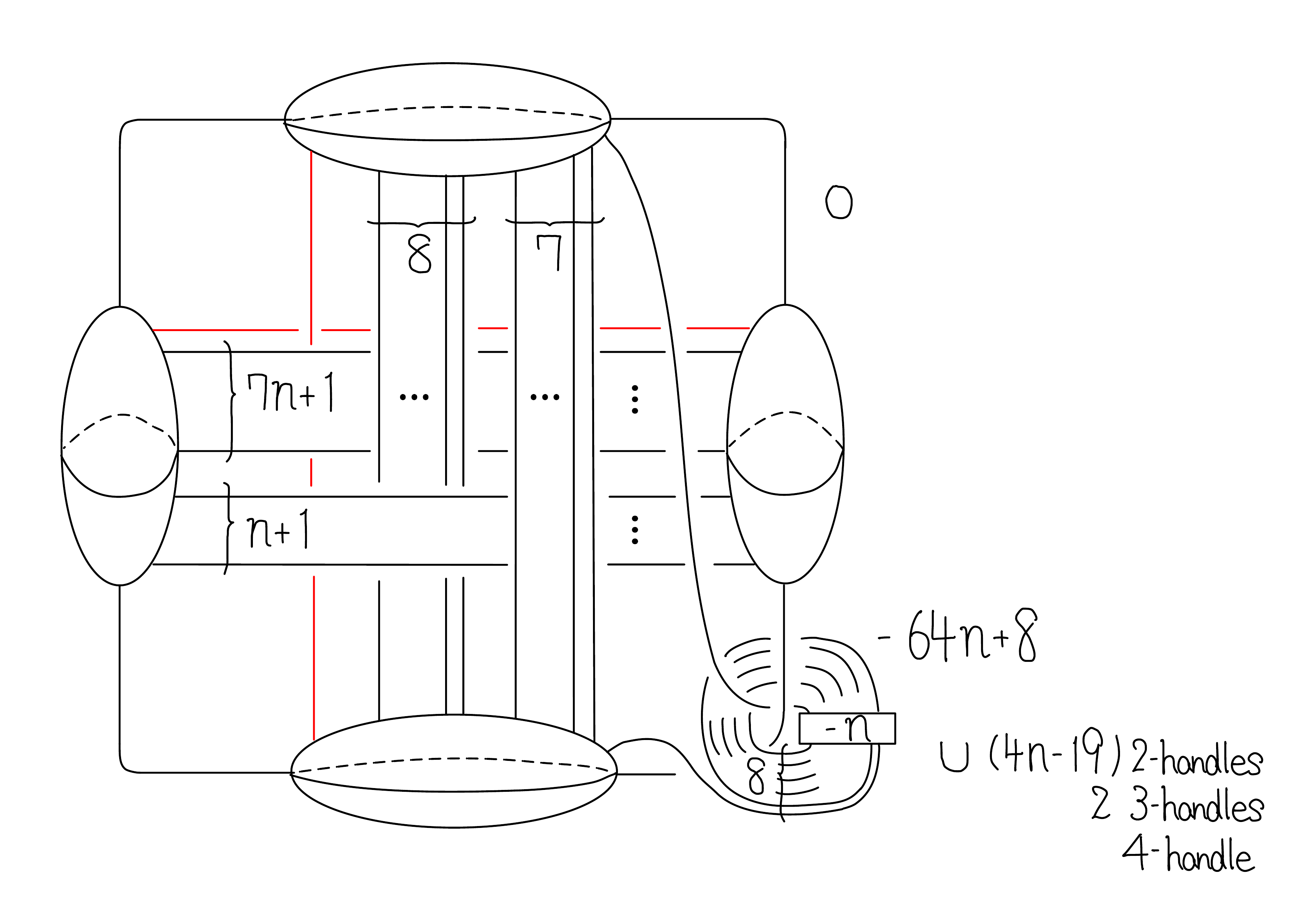} 
 \caption{}
 \label{fig:prop.4}
\end{center}
\end{figure}

\begin{proof}
We begin with Figure \ref{fig:diagram-21}, that is, the $p=8$ case of Figure \ref{fig:diagram-0}.
Note that in the proof of this proposition, we do not slide the 1-handles, the 0-framed 2-handle and the red 2-handles in Figure \ref{fig:diagram-21}.
Using the conversions $aba\rightarrow bab$, $bab\rightarrow aba$ and $(ab)^5\rightarrow bb(ab^3)^2$, we change the word $(ab)^{6n}$ and discover the word $a^7b^{n+1}a^8b^4(ab)^{5n-6}$ for the following conversions:
\begin{align*}
&abababababab\underbrace{ab\cdots ab}_{12n-12}
\rightarrow
aabaaabaabab\underbrace{abababababab}_{12}\underbrace{abababababab}_{12}\underbrace{ab\cdots ab}_{12n-36}\\
&\rightarrow
aabaaabaabab\underbrace{bbabbbabbbab}_{12}\underbrace{bbabbbabbbab}_{12}\underbrace{ab\cdots ab}_{2n-48}\underbrace{ab\cdots ab}_{10n+12}\\
&=
aabaaabaabab\underbrace{bbabbbabbbab}_{12}\underbrace{bbabbbabbbab}_{12}\underbrace{ab\cdots ab}_{2n-48}\underbrace{abababababab}_{12}\underbrace{abababababab}_{12}\underbrace{ab\cdots ab}_{10n-12}\\
&\rightarrow
aa\underline{b}aaa\underline{b}aab\underline{a}b\underbrace{bb\underline{a}bbb\underline{a}bbb\underline{a}b}_{12}\underbrace{bb\underline{a}bbb\underline{a}bbb\underline{a}b}_{12}\underbrace{\underline{a}b\cdots \underline{a}b}_{2n-48}\underbrace{b\underline{a}bbb\underline{a}ba\underline{b}aa\underline{b}}_{12}\underbrace{aa\underline{b}aa\underline{b}abb\underline{a}bb}_{12}\underbrace{ab\cdots ab}_{10n-12}\\
&\sim
\underbrace{a\cdots a}_{7}\underbrace{b\cdots b}_{n+1}\underbrace{a\cdots a}_{8}bbbb\underbrace{ab\cdots ab}_{10n-12}
\end{align*}
Here, we omit underlined letters in the last conversion.
Therefore, in Figure \ref{fig:diagram-21}, we perform handle slides and isotopies described by these conversions and obtain Figure \ref{fig:diagram-27}.
In Figure \ref{fig:diagram-27}, we repeat the deformation in Figure \ref{fig:diagram-7} (b) $(n-2)$ times, so that we can change $(5n-10)$ vertical 2-handles and $(5n-10)$ horizontal ones into $(2n-4)$ vertical ones and $(8n-16)$ horizontal ones.  
Then, we omit $(2n-1)$ vertical and $(n-10)$ horizontal knots with $(-1)$ framings and get Figure \ref{fig:prop.4}. 
\end{proof}

\begin{proof}[Proof of Theorem \ref{thm:theorem1.2}]
The elliptic surface $E(n)_p$ considered in the above propositions has a handle decomposition in Figure \ref{fig:diagram-28}.
First, by performing the handle slide indicated in Figure \ref{fig:diagram-28}, we obtain Figure \ref{fig:diagram-29}.
In this figure, we repeat a handle slide similar to the above slide and get Figure \ref{fig:diagram-30}.
Now, the top-left picture in Figure \ref{fig:diagram-31} is included in Figure \ref{fig:diagram-30}. 
Doing the handle slide indicated by the arrow in the top-left diagram, we get the bottom-left. 
In this diagram, we repeat a similar handle slide and obtain the bottom-right and the top-right diagrams.
Furthermore, in Figure \ref{fig:diagram-30}, we repeat a process similar to that of changing the top-left diagram to the top-right in Figure \ref{fig:diagram-31} and obtain Figure \ref{fig:diagram-32}.
Next, we perform a handle slide indicated in Figure \ref{fig:diagram-32} and repeat a similar handle slide $(p-1)$ times.
We get Figure \ref{fig:diagram-33}. 
Then, we focus on Figure \ref{fig:diagram-34} (a) which is a part of Figure \ref{fig:diagram-33}.
We perform the handle slide shown in Figure \ref{fig:diagram-34} (a) and obtain Figure \ref{fig:diagram-34} (b).
By repeating such a handle slide $(p-1)$ times, we get the picture of Figure \ref{fig:diagram-34} (c).
Here, we omit the red $(-2)$-framed knot and obtain Figure \ref{fig:diagram-34} (d).
We repeat a process similar to that of changing Figure \ref{fig:diagram-34} (a) into Figure \ref{fig:diagram-34} (d) $(p-3)$ times, and get Figure \ref{fig:diagram-34} (e).
Then, doing the handle slide shown in Figure \ref{fig:diagram-34} (e), we obtain \ref{fig:diagram-34} (f).
Finally, in Figure \ref{fig:diagram-33}, we slide and isotope 2-handles as in Figure \ref{fig:diagram-34} repeatedly, and get Figure \ref{fig:diagram-35}.
Performing the handle slide indicated in Figure \ref{fig:diagram-35} and isotoping the diagram, we obtain Figure \ref{fig:diagram-36}.
Therefore, the elliptic surface $E(n)_p$ satisfies the assumption of Lemma \ref{thm:lemma2.1}, and then the elliptic surface $E(n)_{p,p+1}$ has a handle decomposition without 1-handles.
\end{proof}

\begin{figure}[htbp]
\begin{center}
  \includegraphics[width=13cm]{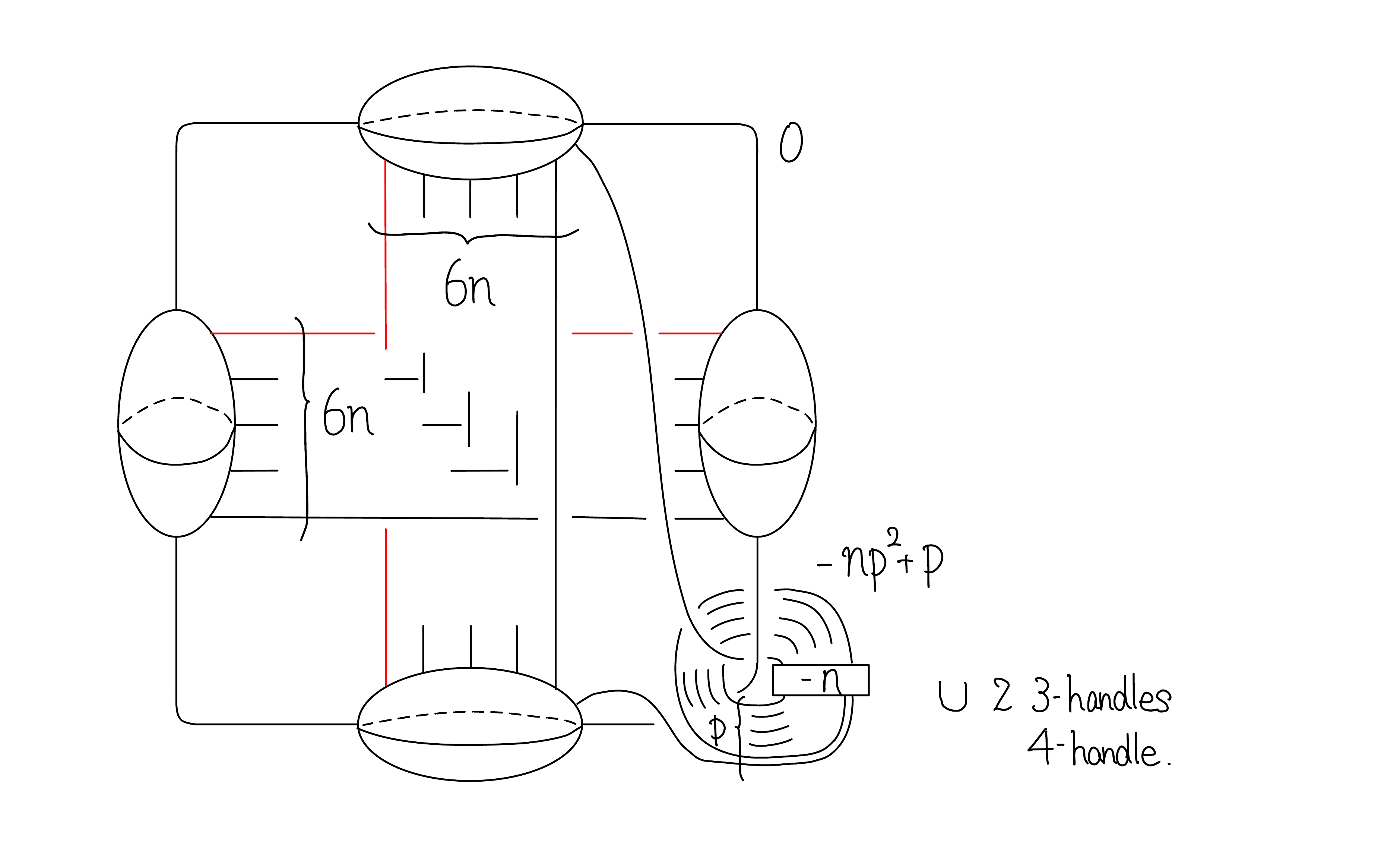} 
 \caption{}
 \label{fig:diagram-0} 
\end{center}
\end{figure}
\

\begin{figure}[htbp]
\begin{center}
  \includegraphics[width=13cm]{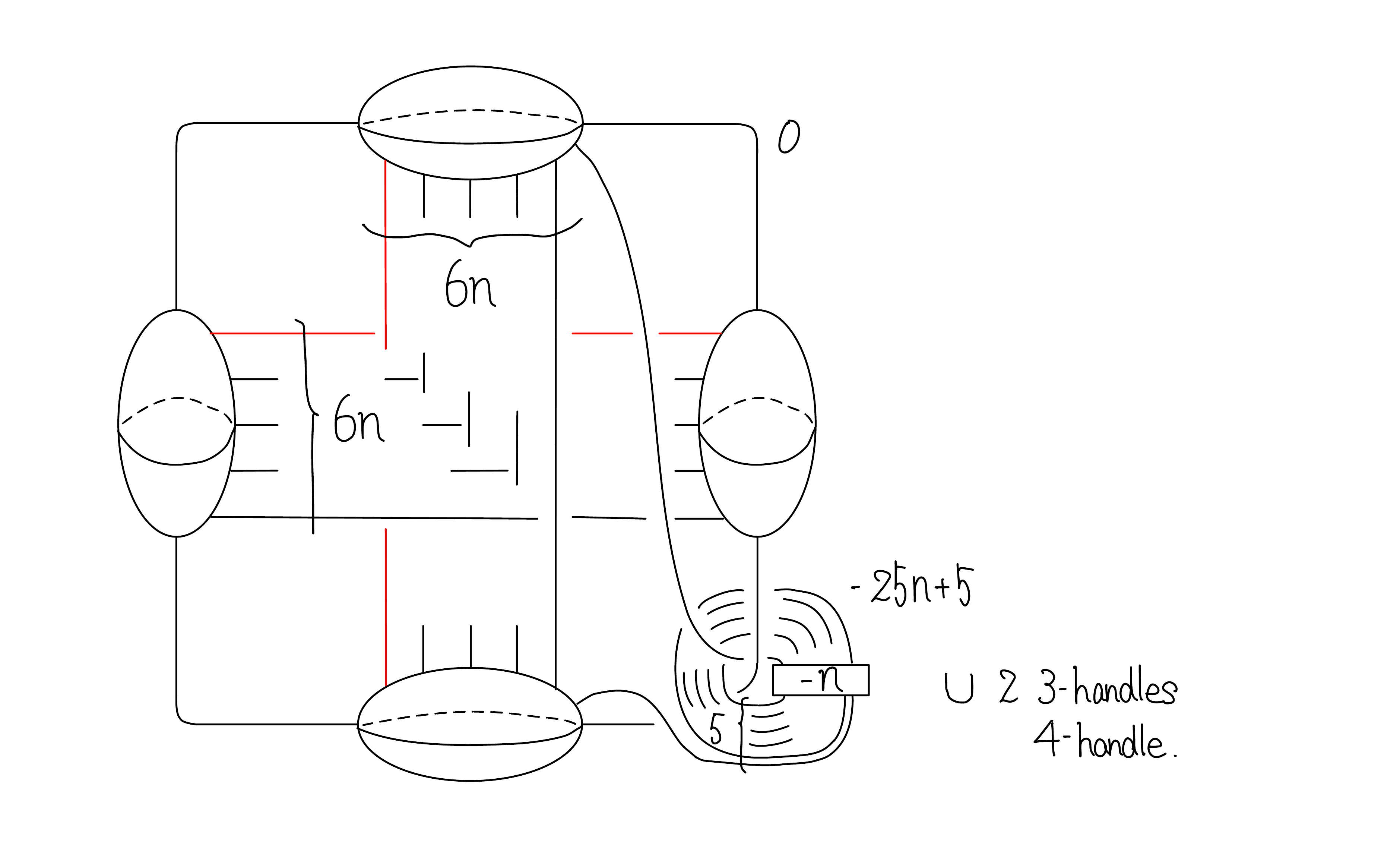} 
 \caption{}
 \label{fig:diagram-1}\end{center}
\end{figure}
\

\begin{figure}[htbp]
\begin{center}
  \includegraphics[width=12cm]{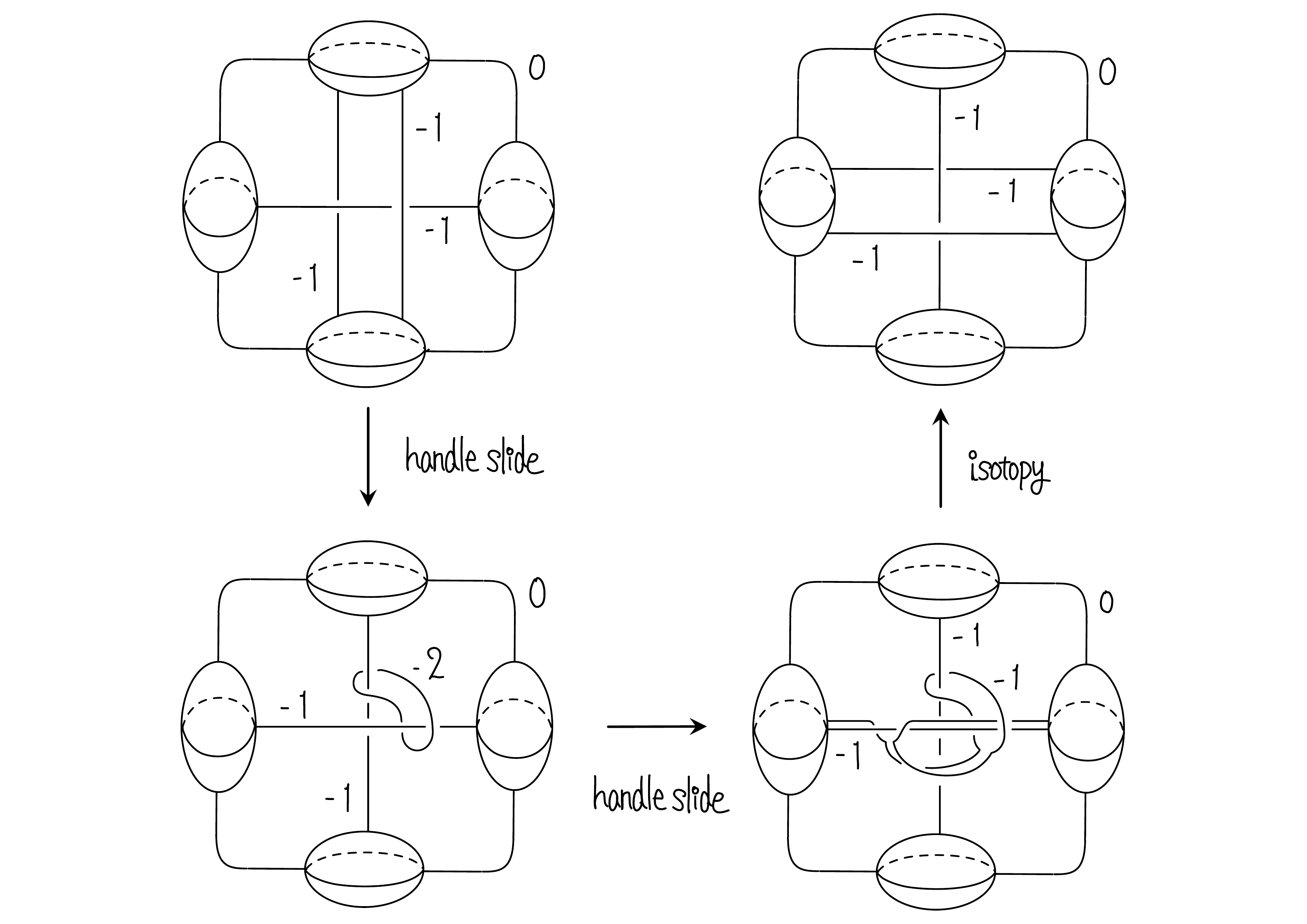} 
 \caption{}
  \label{fig:diagram-3}
\end{center}
\end{figure}

\
\begin{figure}[htbp]
\begin{center}
  \includegraphics[width=12cm]{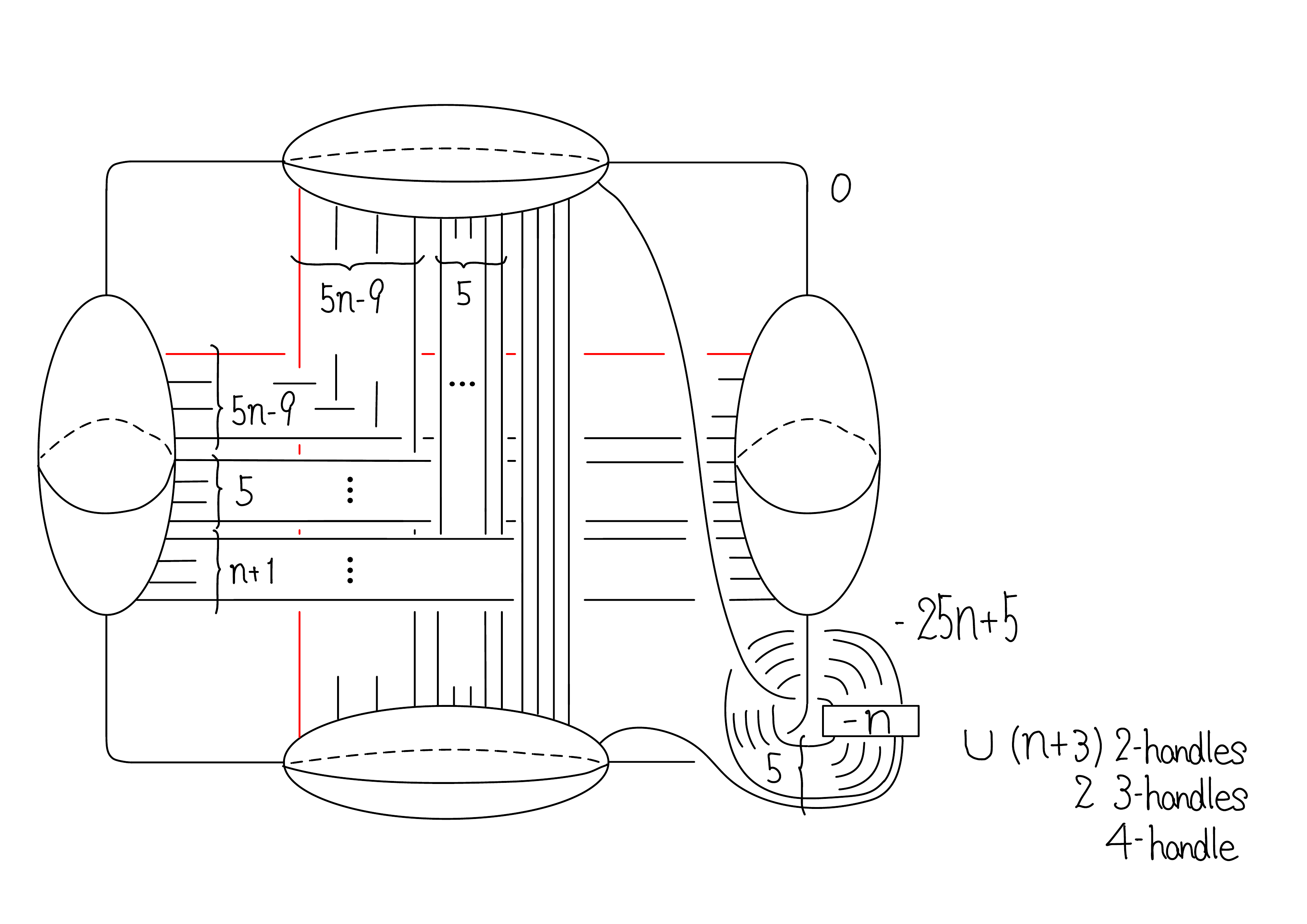} 
 \caption{}
  \label{fig:diagram-6}
\end{center}
\end{figure}
\
\begin{figure}[htbp]
\begin{center}
  \includegraphics[width=12cm]{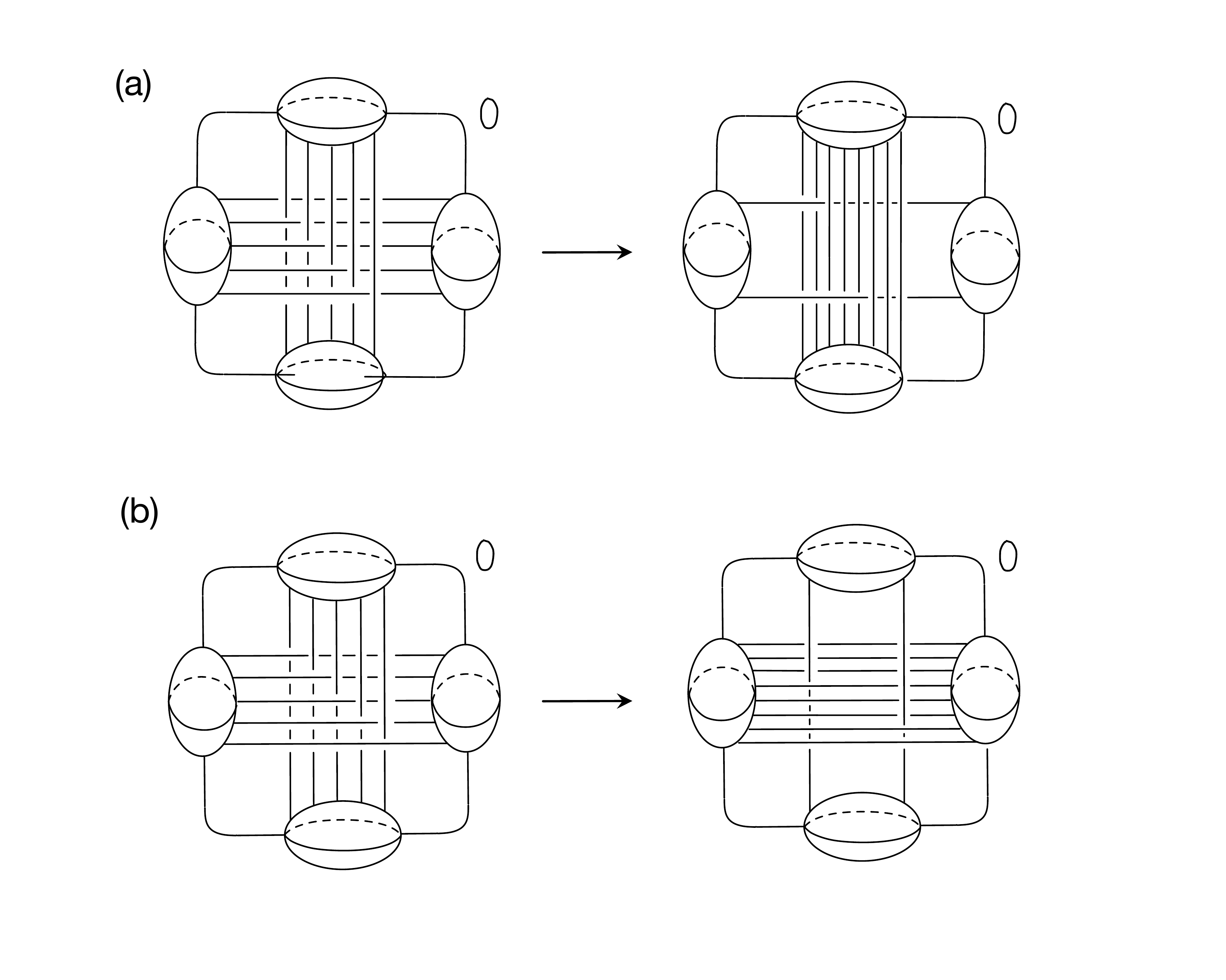} 
 \caption{}
  \label{fig:diagram-7}
\end{center}
\end{figure}
\

\begin{figure}[htbp]
\begin{center}
  \includegraphics[width=13cm]{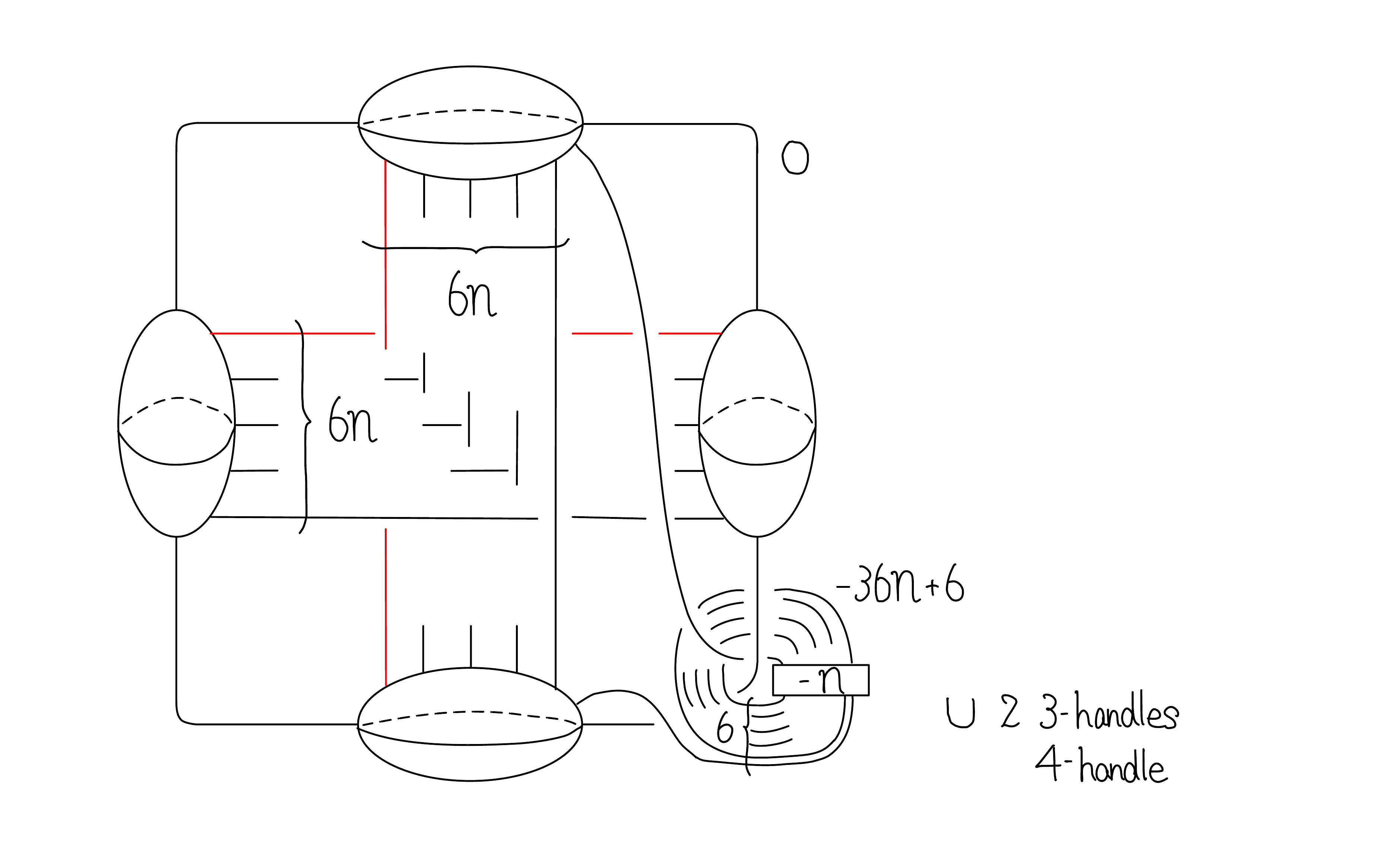} 
 \caption{} 
 \label{fig:diagram-8}
\end{center}
\end{figure}
\

\begin{figure}[htbp]
\begin{center}
  \includegraphics[width=12cm]{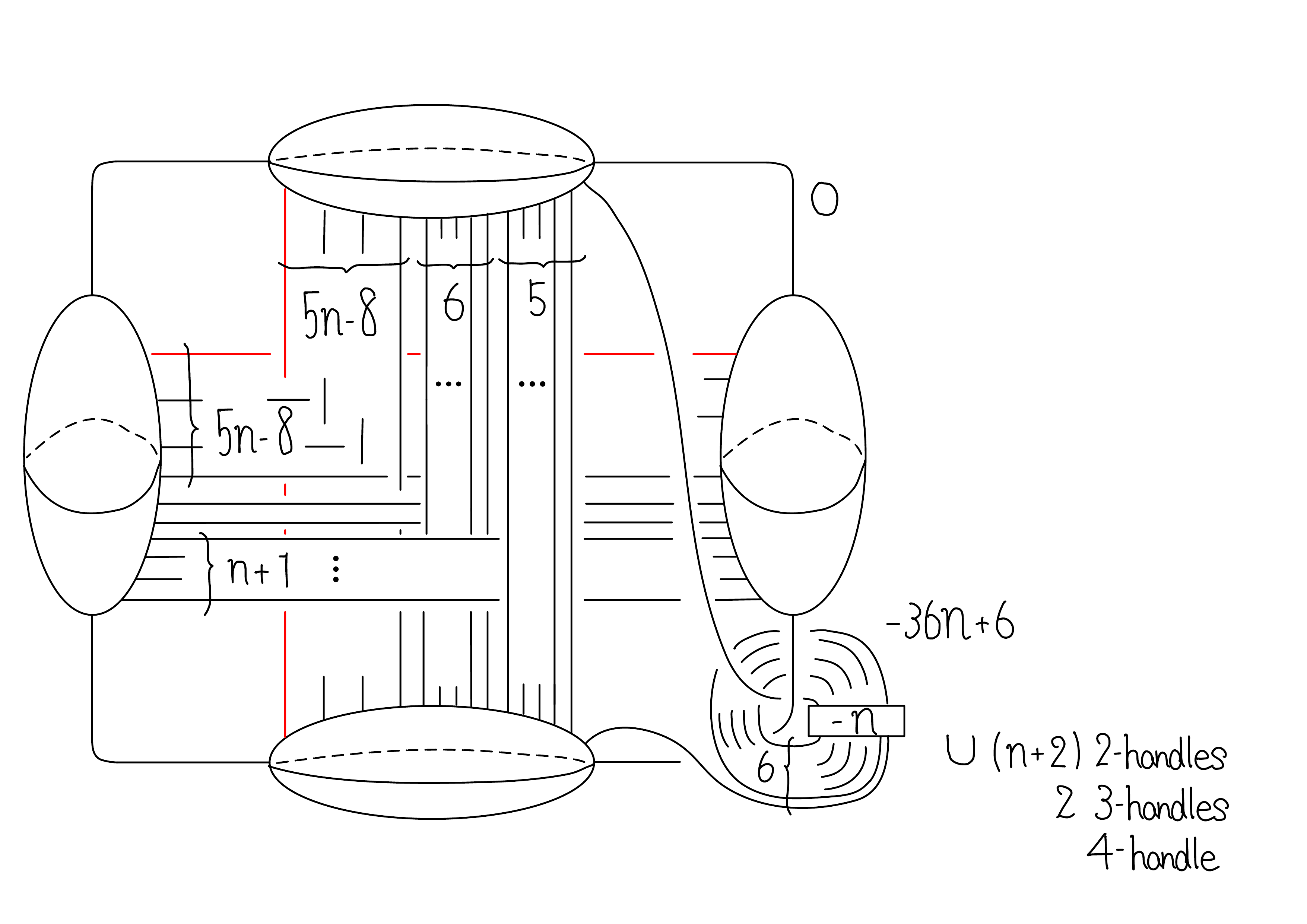} 
 \caption{}
 \label{fig:diagram-13}
\end{center}
\end{figure}
\

\begin{figure}[htbp]
\begin{center}
  \includegraphics[width=13cm]{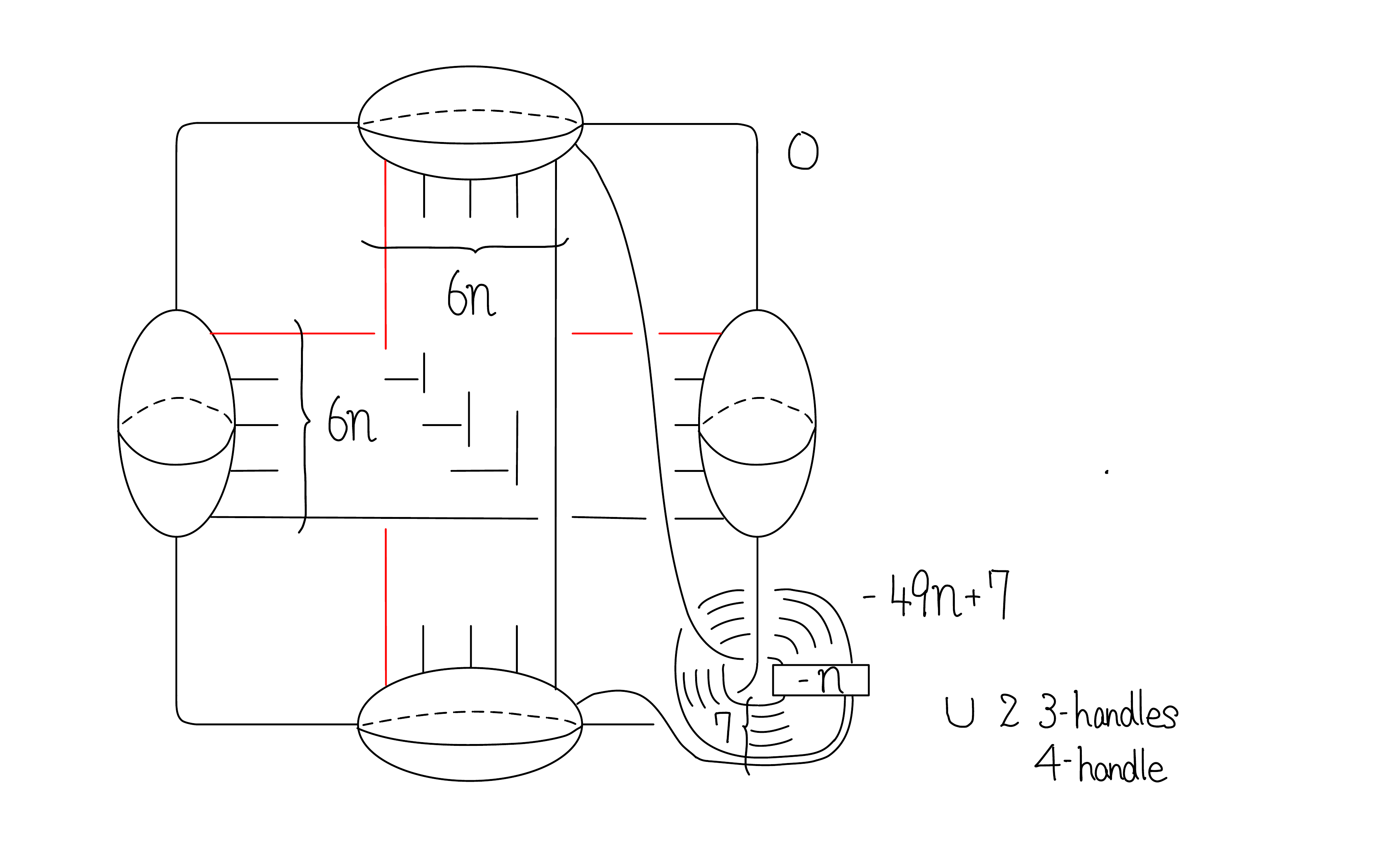} 
 \caption{}
 \label{fig:diagram-15}
\end{center}
\end{figure}
\

\begin{figure}[htbp]
\begin{center}
  \includegraphics[width=12cm]{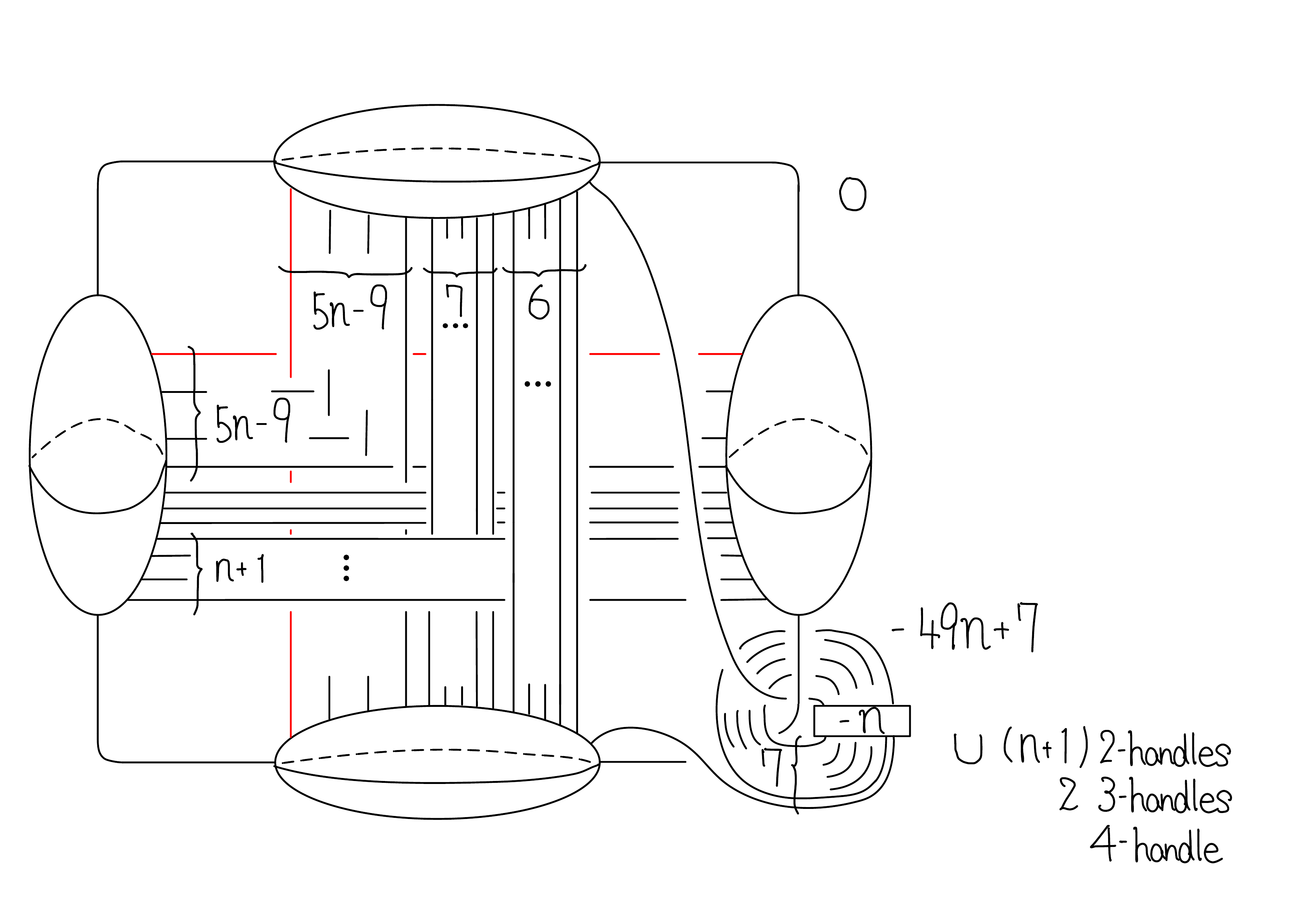} 
 \caption{}
 \label{fig:diagram-20}
\end{center}
\end{figure}
\
\begin{figure}[htbp]
\begin{center}
  \includegraphics[width=12cm]{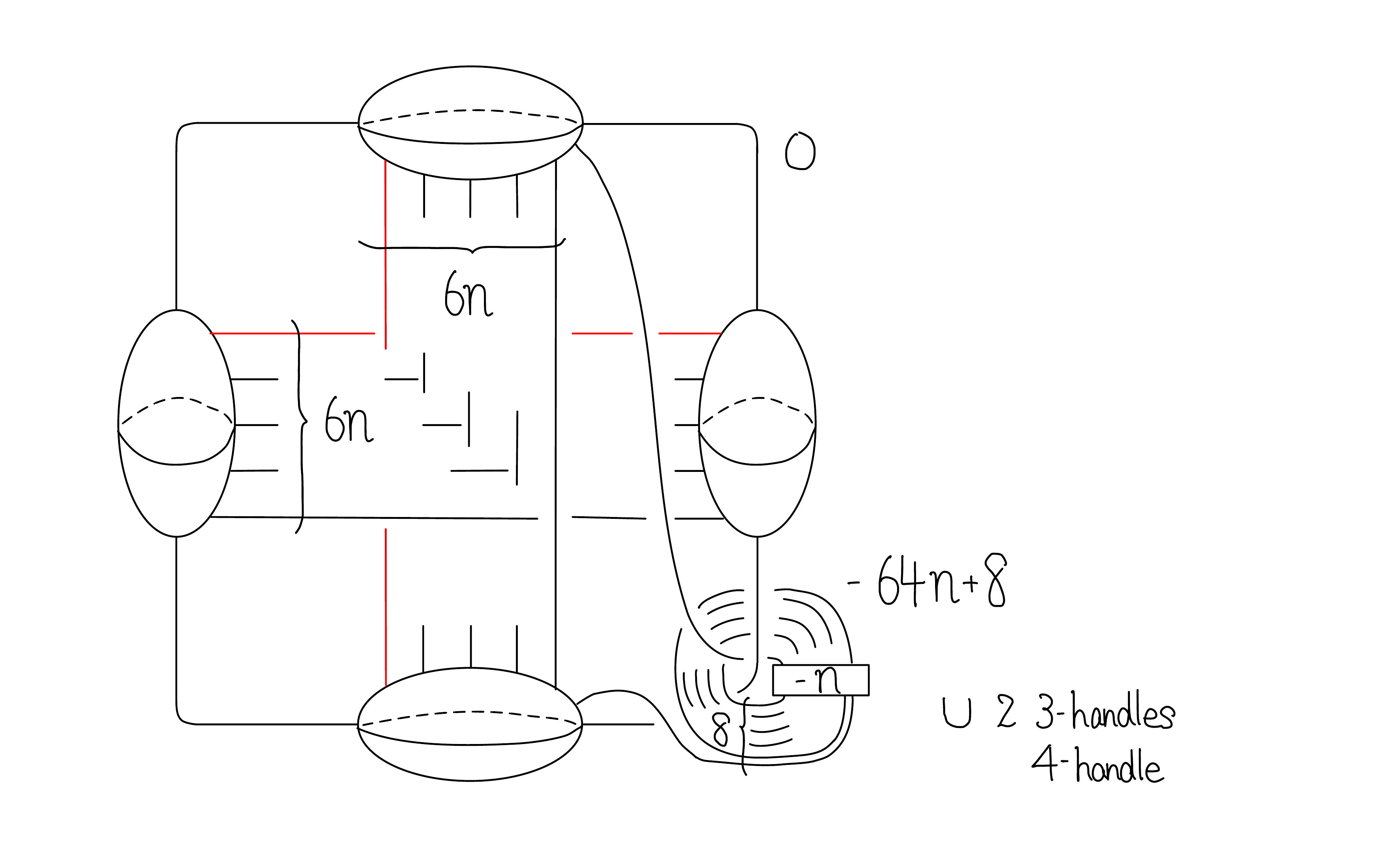} 
 \caption{}
 \label{fig:diagram-21}
\end{center}
\end{figure}
\

\begin{figure}[htbp]
\begin{center}
  \includegraphics[width=12cm]{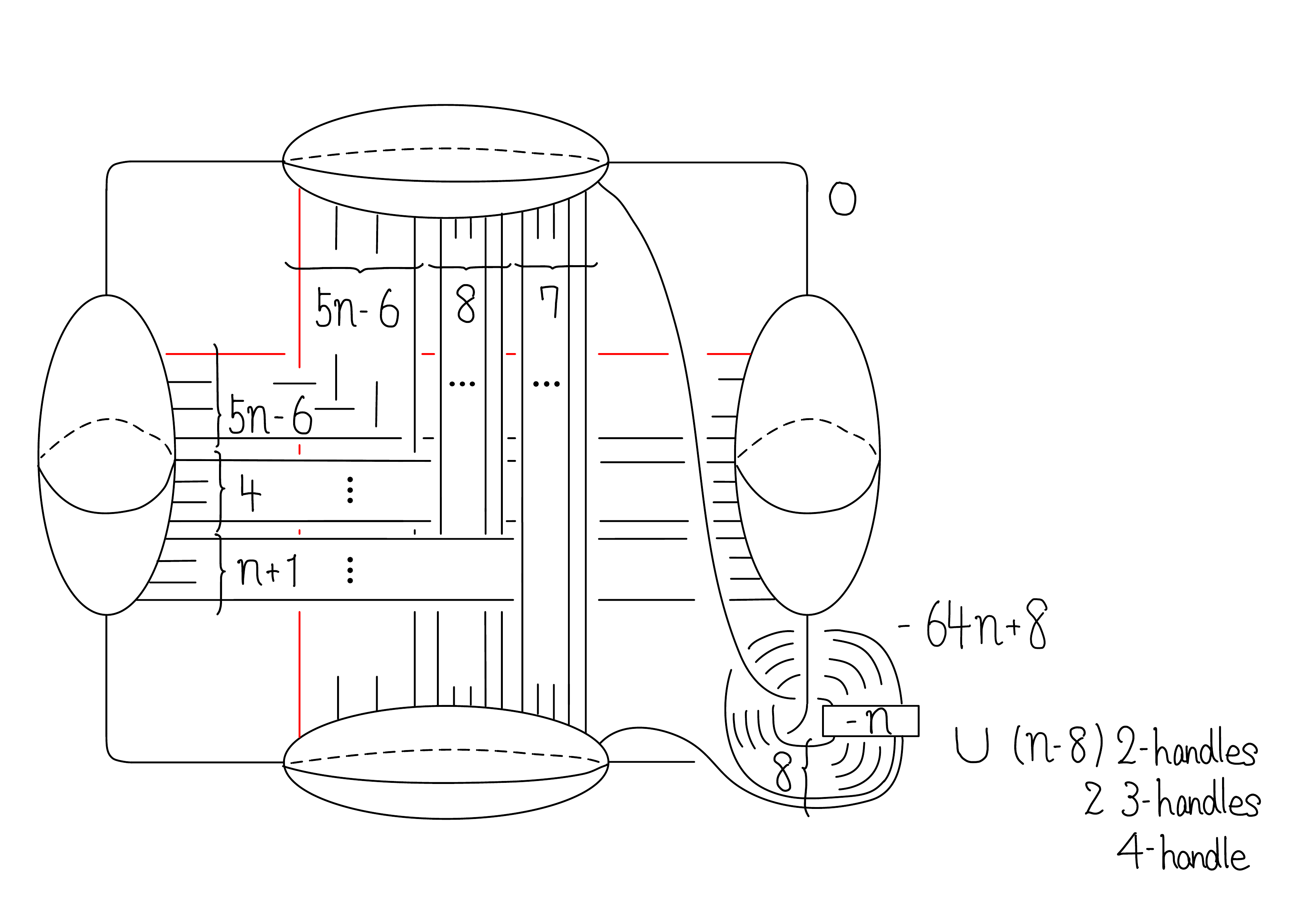} 
 \caption{}
  \label{fig:diagram-27}
\end{center}
\end{figure}
\
\
\begin{figure}[htbp]
\begin{center}
  \includegraphics[width=12cm]{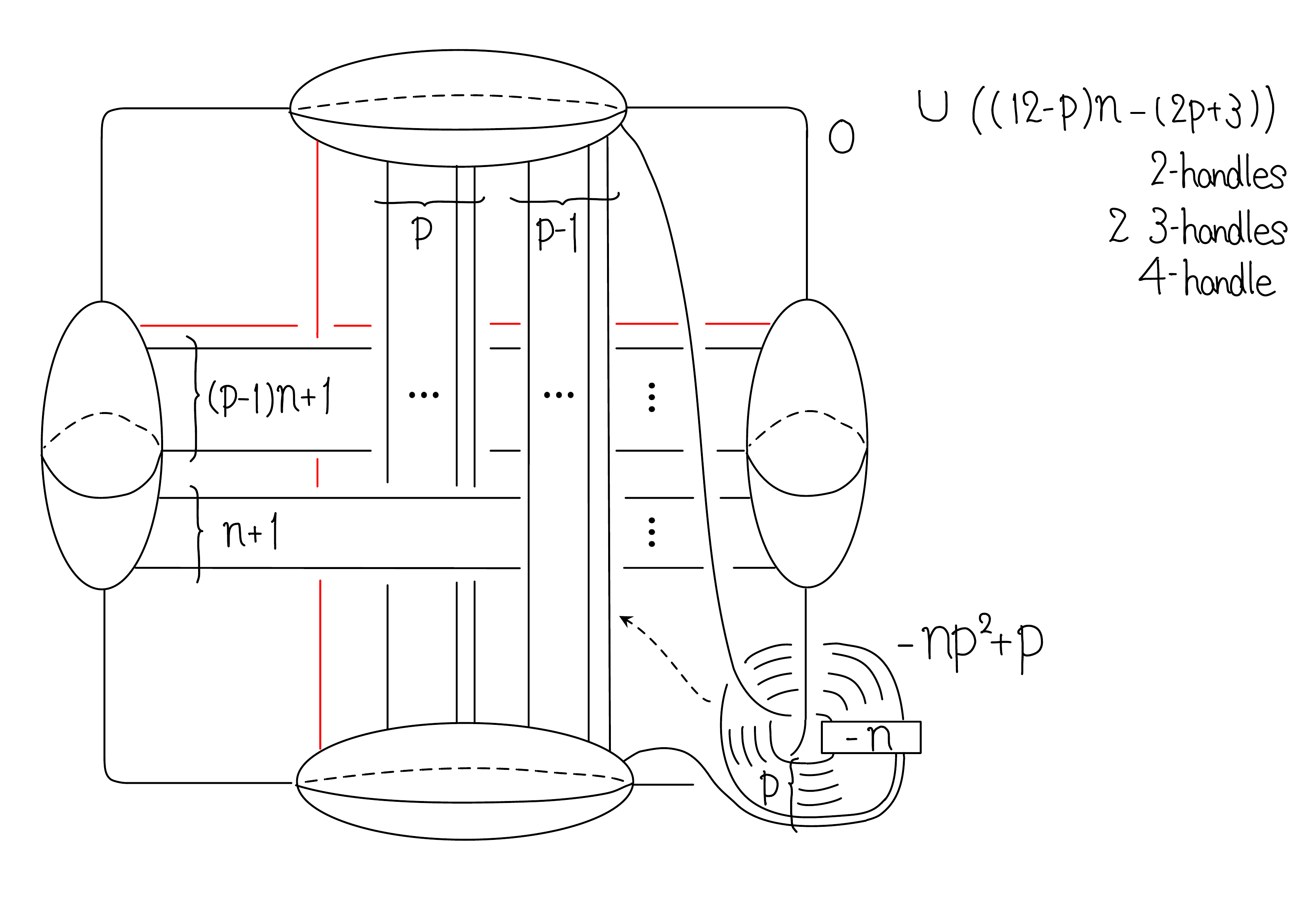} 
 \caption{}
  \label{fig:diagram-28}
\end{center}
\end{figure}
\
\begin{figure}[htbp]
\begin{center}
  \includegraphics[width=12cm]{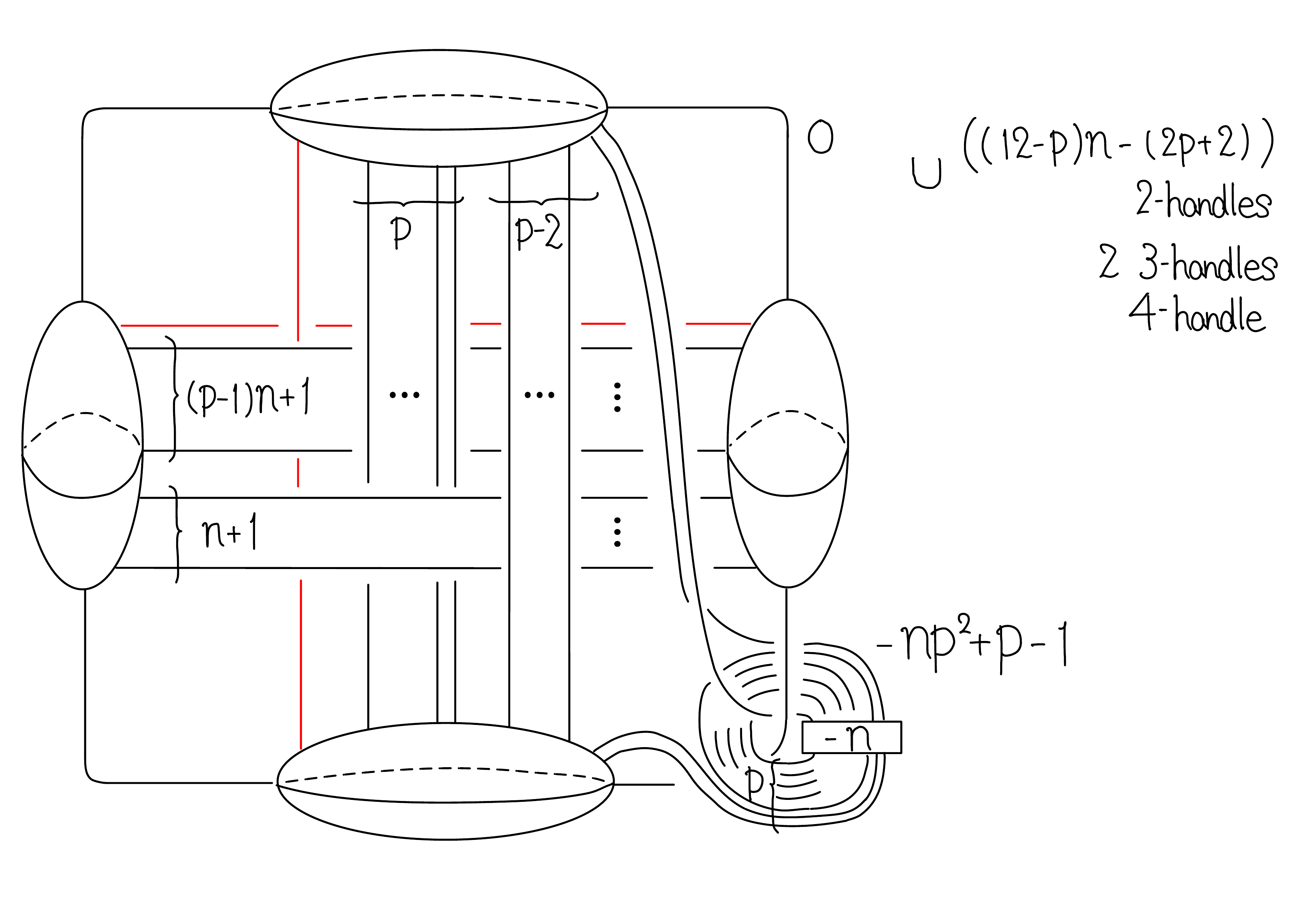} 
 \caption{}
  \label{fig:diagram-29}
\end{center}
\end{figure}
\

\begin{figure}[htbp]
\begin{center}
  \includegraphics[width=11cm]{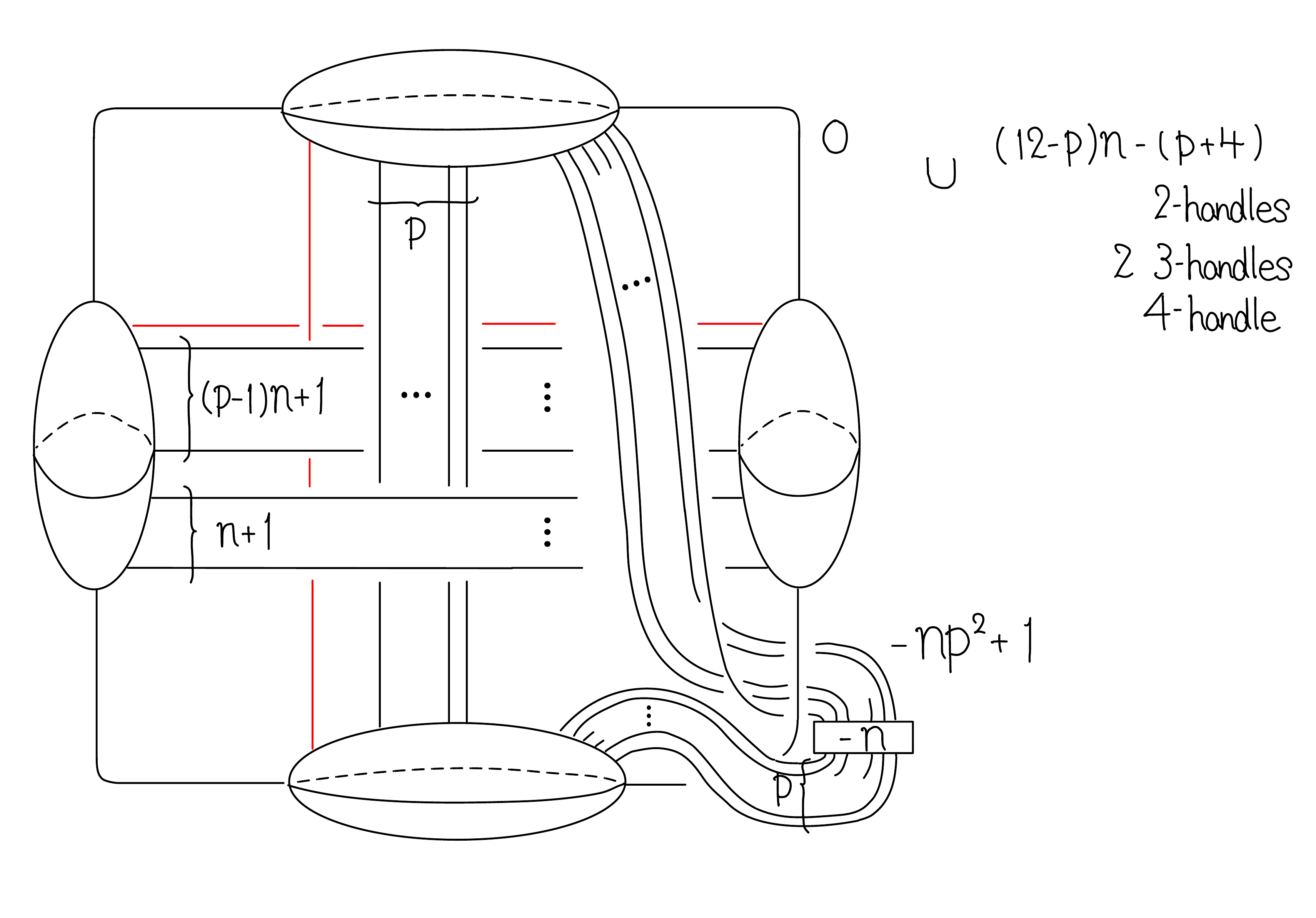} 
 \caption{}
  \label{fig:diagram-30}
\end{center}
\end{figure}
\

\begin{figure}[htbp]
\begin{center}
  \includegraphics[width=10cm]{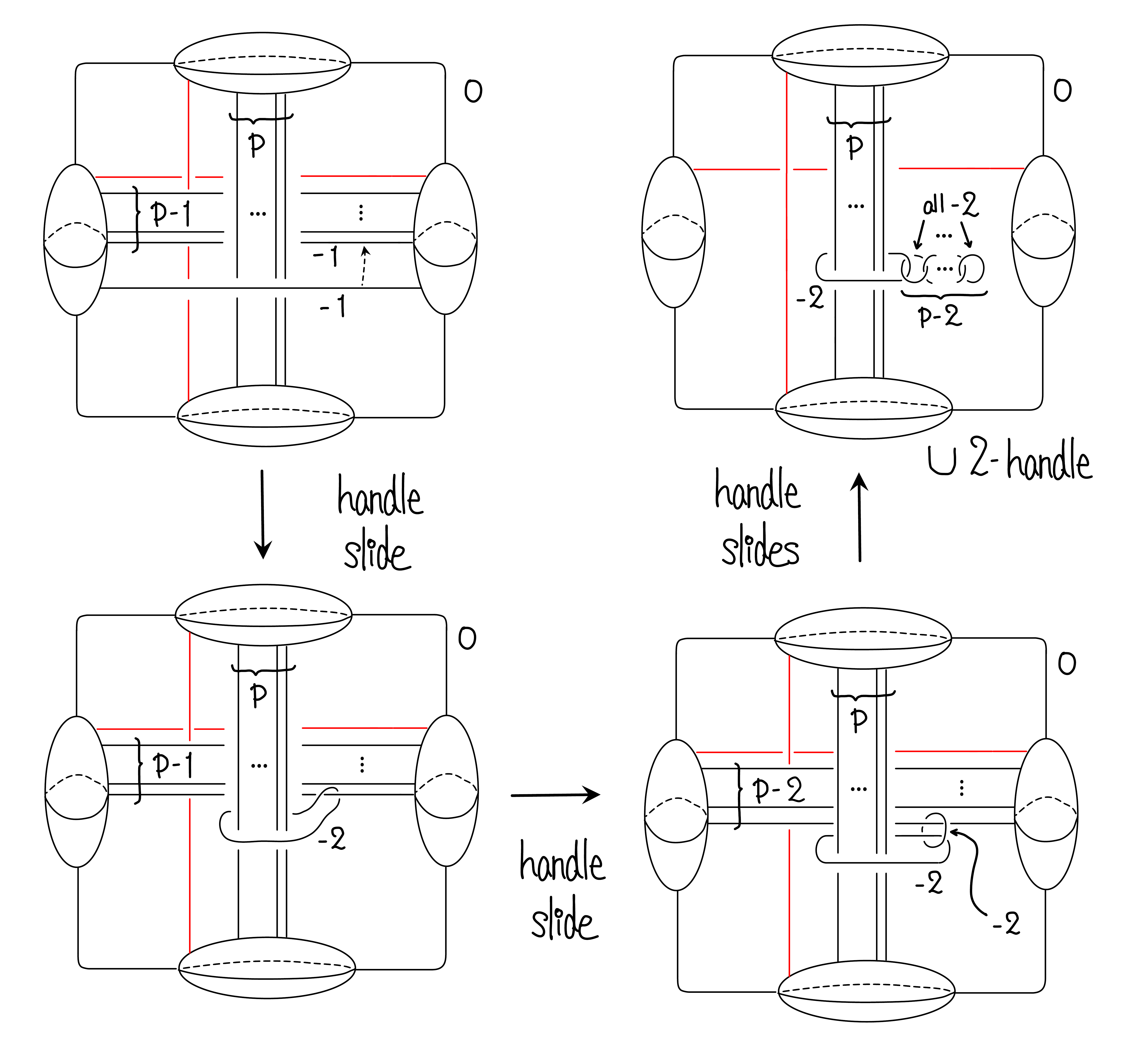} 
 \caption{}
  \label{fig:diagram-31}
\end{center}
\end{figure}
\
\begin{figure}[htbp]
\begin{center}
  \includegraphics[width=12cm]{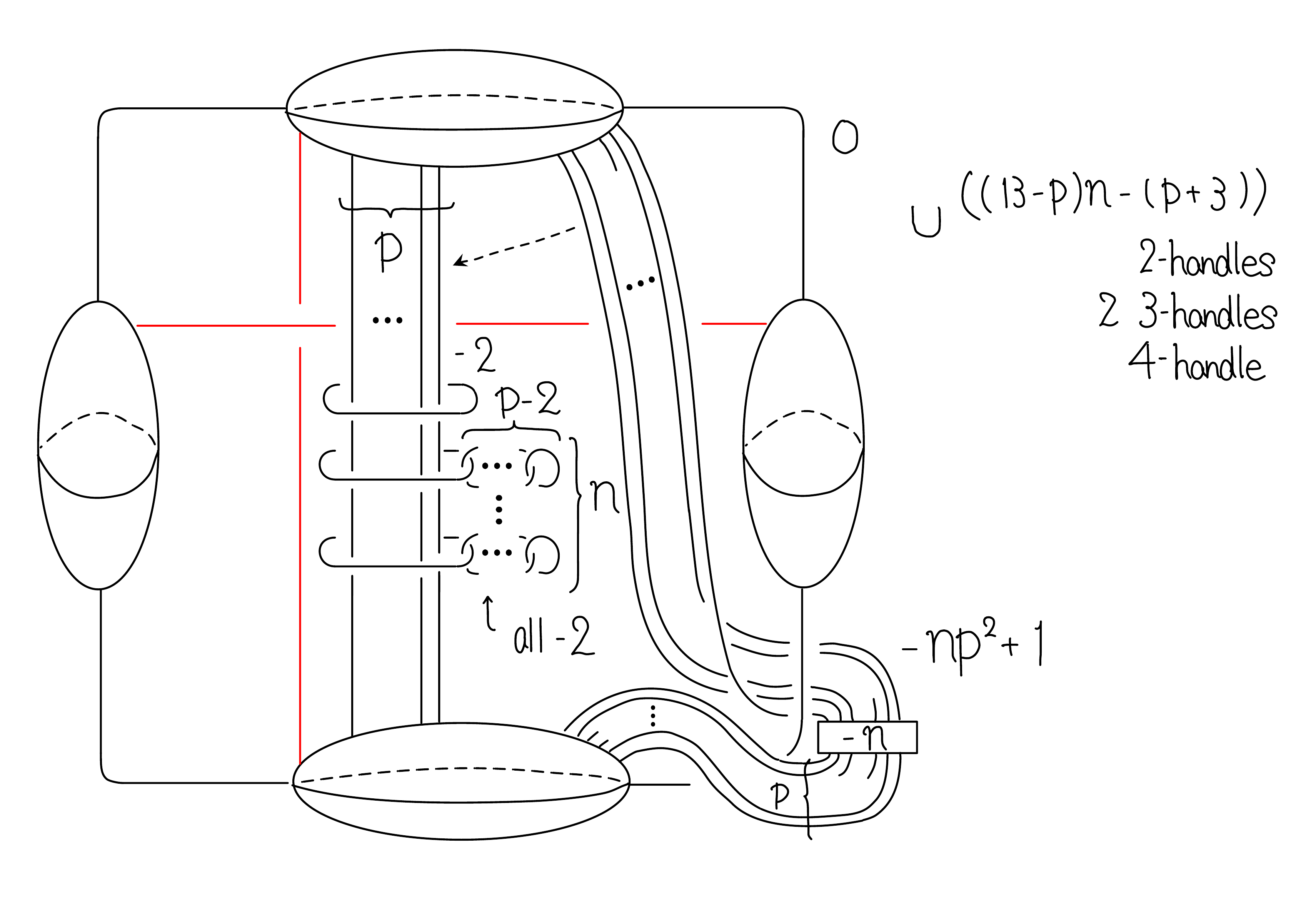} 
 \caption{}
  \label{fig:diagram-32}
\end{center}
\end{figure}
\
\begin{figure}[htbp]
\begin{center}
  \includegraphics[width=12cm]{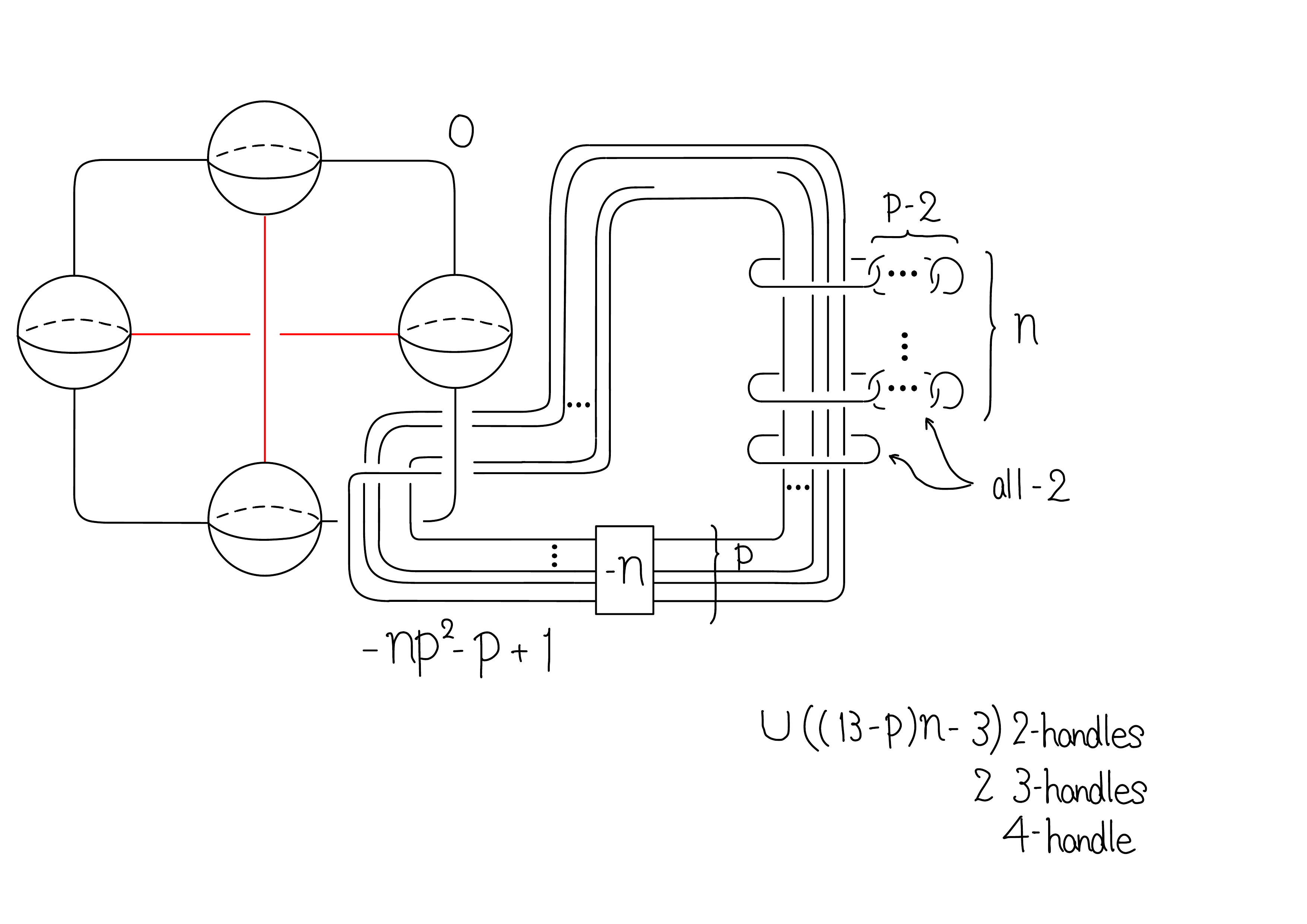} 
 \caption{}
  \label{fig:diagram-33}
\end{center}
\end{figure}
\
\begin{figure}[htbp]
\begin{center}
  \includegraphics[width=16cm]{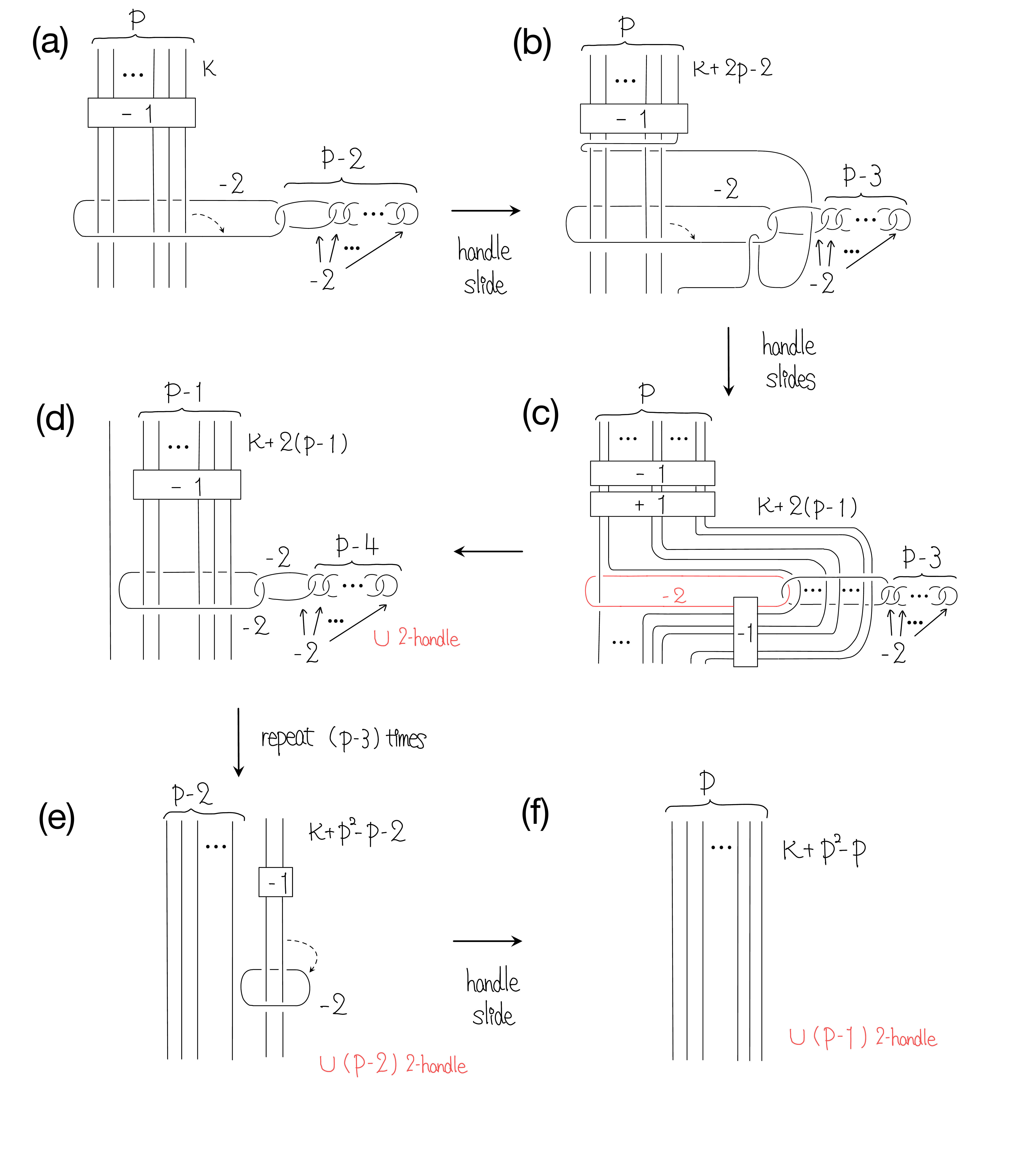} 
 \caption{}
  \label{fig:diagram-34}
\end{center}
\end{figure}
\
\begin{figure}[htbp]
\begin{center}
  \includegraphics[width=12cm]{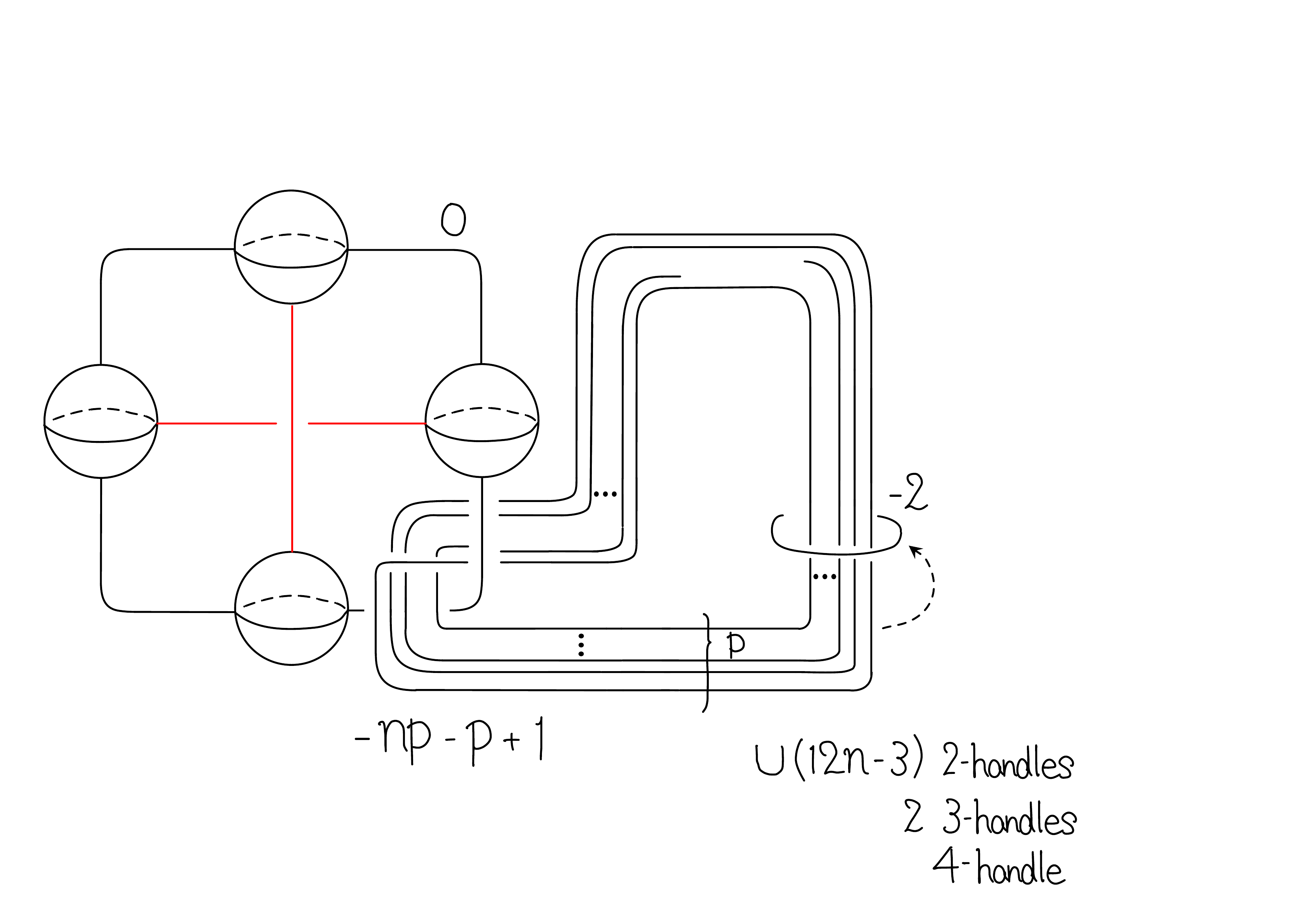} 
 \caption{}
  \label{fig:diagram-35}
\end{center}
\end{figure}
\
\begin{figure}[htbp]
\begin{center}
  \includegraphics[width=12cm]{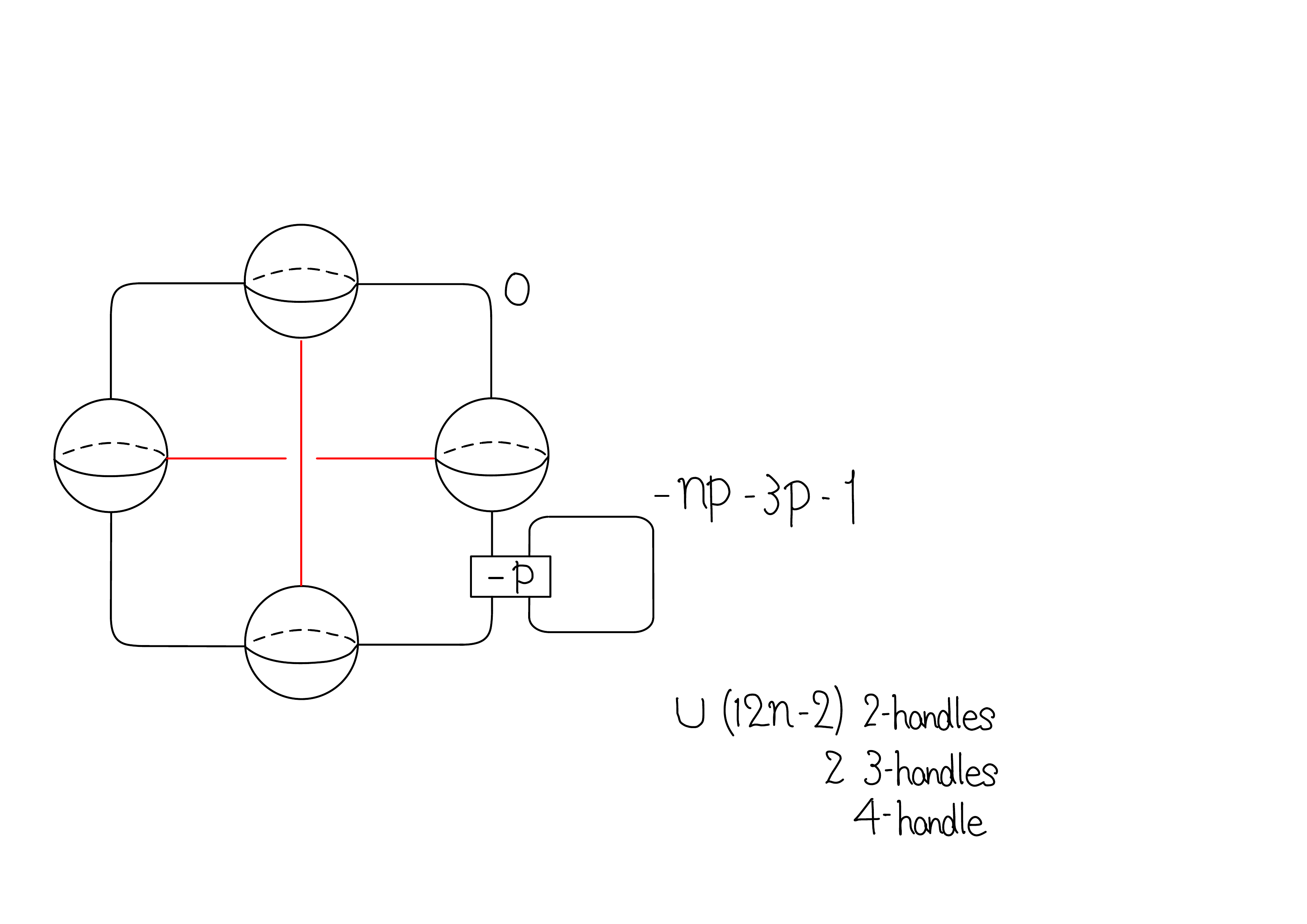} 
 \caption{}
  \label{fig:diagram-36}
\end{center}
\end{figure}
\

\section*{\footnotesize
{Department of Pure and Applied Mathematics, Graduate School of Information Science and Technology, Osaka University, 1-5 Yamadaoka, Suita, Osaka 565-0871, Japan}}

\end{document}